\DeclareMathAlphabet{\mathpzc}{OT1}{pzc}{m}{it}
\def\cP{\mathscr{P}}
\def\BN{\mathbb{N}}
\def\BR{\mathbb{R}}
\def\BZ{\mathbb{Z}}
\def\adots{\mathinner{\mkern1mu\raise1.0pt\vbox{\kern7.0pt\hbox{.}}\mkern2mu\raise5.0pt\hbox{.}\mkern2mu\raise9.0pt\hbox{.}\mkern1mu}}
\newcommand{\cupdot}{\mathbin{\mathaccent\cdot\cup}}
\def\diag{\operatorname{diag}}
\def\dddots{\mathinner{\mkern1mu\raise10.0pt\vbox{\kern7.0pt\hbox{.}}\mkern2mu\raise5.3pt\hbox{.}\mkern2mu\raise1.0pt\hbox{.}\mkern1mu}}
\def\dddotssmall{\mathinner{\mkern1mu\raise7.0pt\vbox{\kern7.0pt\hbox{.}}\mkern-1mu\raise4pt\hbox{.}\mkern-1mu\raise1.0pt\hbox{.}\mkern1mu}}
\def\K{\operatorname{K}}
\def\K0{\operatorname{K}_0}
\def\PSL2{\operatorname{PSL}_2}
\def\Rpos{\BR_{>0}}
\def\SL2{\operatorname{SL}_2}
\newcommand\TPath[4]{\cP(#2,#3,#4)}
\numberwithin{equation}{section}
\newtheorem{Lemma}{Lemma}[section]
\theoremstyle{definition}
\newtheorem{Definition}[Lemma]{Definition}
\newtheorem{Setup}[Lemma]{Setup}
\newtheorem{ThmIntro}{Theorem}
\newtheorem*{bfhpg*}{}
\newcommand{\Canakci}{\c{C}anak\c{c}\i}
\newcommand{\Ilke}{\.{I}lke }
\begin{document}

\setlength{\parindent}{0pt}
\setlength{\parskip}{7pt}

\title[Friezes, weak friezes, and T-paths]{Friezes, weak friezes, and T-paths}

\author{\Ilke \Canakci}

\address{Department of Mathematics, VU University, Faculty of Sciences, De Boelelaan 1111, 1081 HV Amsterdam, The Netherlands}

\email{i.canakci@vu.nl}

\author{Peter J\o rgensen}

\address{School of Mathematics, Statistics and Physics, Newcastle University, Newcastle upon Tyne NE1 7RU, United Kingdom}
\email{peter.jorgensen@ncl.ac.uk}

\urladdr{http://www.staff.ncl.ac.uk/peter.jorgensen}


\keywords{Cluster algebra, cluster expansion formula, frieze pattern, generalised frieze pattern, polygon dissection, positivity, semifield.}

\subjclass[2010]{05B45, 05E15, 05E99, 51M20}


\begin{abstract} 

\medskip
\noindent
Frieze patterns form a nexus between algebra, combinatorics, and geometry.  $T$-paths with respect to triangulations of surfaces have been used to obtain expansion formulae for cluster variables.

\medskip
\noindent
This paper will introduce the concepts of weak friezes and $T$-paths with respect to dissections of polygons.  Our main result is that weak friezes are characterised by satisfying an expansion formula which we call the $T$-path formula.

\medskip
\noindent
We also show that weak friezes can be glued together, and that the resulting weak frieze is a frieze if and only if so was each of the weak friezes being glued.

\end{abstract}

\maketitle

\setcounter{section}{-1}
\section{Introduction}
\label{sec:introduction}

This paper will introduce weak friezes and $T$-paths with respect to dissections of polygons, and show that weak friezes are characterised by satisfying an expansion formula which we call the $T$-path formula.  See Definitions \ref{def:friezes_and_weak_friezes}, \ref{def:T-paths_and_T-path_formula} and Theorem \ref{thm:A}.

Weak friezes are strongly related to the generalised frieze patterns defined in \cite[sec.\ 5]{BHJ} and to the frieze patterns of \cite[sec.\ 1]{Cox}, which form a nexus between algebra, combinatorics, and geometry; see \cite{M} for a recent survey.  $T$-paths with respect to dissections of polygons are a generalisation of $T$-paths with respect to triangulations of polygons as defined in \cite[sec.\ 1.2]{Sch1}.  

In preparation for the proof of Theorem \ref{thm:A}, we show in Theorem \ref{thm:B} that weak friezes can be glued together.  There is also a notion of frieze, and Theorem \ref{thm:C} shows that when weak friezes are glued together, the result is a frieze if and only if each of the weak friezes being glued is a frieze.

Note that $T$-paths with respect to triangulations of polygons and general surfaces were used in \cite[thm.\ 2.10]{BR}, \cite[thm.\ 3.8]{GM}, \cite[thm.\ 1.1]{MSW}, \cite[thm.\ 1.2]{Sch1}, \cite[thm.\ 3.1]{Sch2}, and \cite[thm.\ 3.5]{ST} to obtain expansion formulae for cluster variables.  This permitted the resolution of the positivity conjecture for cluster algebras arising from surfaces, stating that all Laurent polynomials for cluster variables have positive coefficients.  Cluster algebras and the positivity conjecture were introduced in \cite[def.\ 2.3]{FZ} and \cite[sec.\ 1]{FZ}, and the conjecture was later resolved in general, see \cite[cor.\ 0.4]{GHKK} and \cite[thm.\ 1.1]{LS}.

\smallskip
\subsection{Friezes and weak friezes}~\\
\label{subsec:friezes_and_weak_friezes}
\vspace{-3.4ex}

If $P$ is a polygon, $\alpha \neq \beta$ vertices, then there is a diagonal $\{ \alpha,\beta \}$, and the set of diagonals of $P$ is denoted $\diag(P)$.  A dissection $D$ of $P$ is a set of pairwise non-crossing diagonals between non-neighbouring vertices, and the empty dissection $D = \emptyset$ is allowed.

Throughout the paper, $( K,+,\cdot )$ is a fixed semifield, see Definition \ref{def:semifield}.  An important feature is that $K$ has no ``subtraction'' in the sense of an inverse operation of $+$.  Examples are $( \Rpos,+,\cdot )$, the positive real numbers with addition and multiplication, and $( \BZ,\max,+ )$, the so-called tropical semifield.  In the latter case, the operations denoted $+$ and $\cdot$ in the abstract are indeed given by maximum and addition in $\BZ$.
\begin{Definition}
[Friezes and weak friezes]
\label{def:friezes_and_weak_friezes}
Let $P$ be a polygon, $f : \diag(P) \xrightarrow{} K$ a map.  Write $f( \alpha,\beta ) := f\big( \{ \alpha,\beta \}\big)$.
\begin{enumerate}
\setlength\itemsep{4pt}

  \item  $f$ is called a {\em frieze} if it satisfies the {\em Ptolemy relation}
\begin{equation}
\label{equ:Ptolemy}
  f( \alpha,\beta )f( \gamma,\delta ) =
  f( \alpha,\gamma )f( \beta,\delta )
  + f( \alpha,\delta )f( \beta,\gamma )
\end{equation}
when $\{ \alpha,\beta \}$ and $\{ \gamma,\delta \}$ are crossing diagonals.  

\item  If $D$ is a dissection of $P$, then
$f$ is called a {\em weak frieze with respect to $D$} if it satisfies the Ptolemy relation   \eqref{equ:Ptolemy} when $\{ \alpha,\beta \}$ and $\{ \gamma,\delta \}$ are crossing diagonals  satisfying $\{ \gamma,\delta \} \in D$.  

\end{enumerate}
\end{Definition}
The relation between (weak) friezes and (generalised) frieze patterns will be explained in Section \ref{subsec:frieze_patterns}.  Note that friezes with values in the tropical semifield $( \BZ,\max,+ )$ were defined in \cite[sec.\ 2.1]{G}; they are known as tropical friezes.  

Any map is a weak frieze with respect to the empty dissection.  For example, a {\it trivial weak frieze} is a map on diagonals with constant value $1_K$.  Gluing together three trivial weak friezes with values in $\Rpos$, two of them on $4$-gons, the third on a pentagon, gives the weak frieze $f$ on a $9$-gon shown in Figure \ref{fig:9-gon}.  Weak friezes can be glued by Theorem \ref{thm:B}, and the edges along which we glue become internal diagonals which are adjoined to the dissection $D$.

\smallskip
\subsection{$T$-paths}~\\
\label{subsec:T-paths}
\vspace{-3.4ex}

The notion of $T$-path with respect to a triangulation of a polygon was defined in \cite[sec.\ 1.2]{Sch1}, and we generalise it to $T$-path with respect to a dissection as follows.  
\begin{Definition}
[$T$-paths and the $T$-path formula]
\label{def:T-paths_and_T-path_formula}
Let $P$ be a polygon, $D$ a dissection of $P$.  If $\pi_1 \neq \pi_p$ are vertices, then a {\em $T$-path from $\pi_1$ to $\pi_p$ with respect to $D$} is an ordered tuple $\pi = ( \pi_1, \ldots, \pi_p )$ of vertices satisfying the following.
\begin{enumerate}
\setlength\itemsep{4pt}

  \item  $\{ \pi_1,\pi_2 \},$ $\{ \pi_2,\pi_3 \},$ $\ldots,$ $\{ \pi_{ p-1 },\pi_p \}$ are pairwise different diagonals.
  
  \item  No diagonal $\{ \pi_i,\pi_{ i+1 } \}$ crosses a diagonal in $D$.
  
  \item  The diagonals $\{ \pi_{ 2j },\pi_{ 2j+1 } \}$ are in $D$, and cross the diagonal $\{ \pi_1,\pi_p \}$ at pairwise different points which progress monotonically in the direction from $\pi_1$ to $\pi_p$.  

\end{enumerate}
The set of $T$-paths from $\pi_1$ to $\pi_p$ with respect to $D$ is denoted $\TPath{P}{D}{\pi_1}{\pi_p}$.  A map $f : \diag( P ) \xrightarrow{} K$ is said to {\em satisfy the $T$-path formula with respect to $D$} if
\begin{equation}
\label{equ:the_T-path_formula}
  f( \alpha,\beta ) = \sum_{ \pi \in \TPath{P}{D}{\alpha}{\beta} } f( \pi )
\end{equation}
for all vertices $\alpha \neq \beta$ of $P$, where
\begin{equation}
\label{equ:f_on_T-paths}
  f( \pi ) := \frac{ \prod_{\mbox{$i$ odd}}f( \pi_i,\pi_{ i+1 } ) }{ \prod_{\mbox{$j$ even}} f( \pi_j,\pi_{ j+1 } ) }
\end{equation}
for a $T$-path $\pi$. 
\end{Definition}
For example, in the situation of Figure \ref{fig:9-gon}, the $T$-paths from $\alpha$ to $\beta$ with respect to $D$ are shown in Figure \ref{fig:T-paths}.

Our first main result is the following, which is inspired by the expansion formula for cluster algebras of type $A$ given in \cite[thm.\ 1.2]{Sch1}, and has that result as a special case.

\begin{ThmIntro}
[Weak friezes and $T$-paths]
\label{thm:A}
Let $P$ be a polygon with a dissection $D$.  A map $f : \diag(P) \xrightarrow{} K$ is a weak frieze with respect to $D$ if and only if it  satisfies the $T$-path formula with respect to $D$, see Definition \ref{def:T-paths_and_T-path_formula}.
\end{ThmIntro}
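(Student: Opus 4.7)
The plan is a joint induction on $n(\alpha,\beta) := \#\{d \in D : d \text{ crosses } \{\alpha,\beta\}\}$, handling both directions of the biconditional simultaneously.

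Base case: when $n(\alpha,\beta) = 0$, condition (iii) of Definition \ref{def:T-paths_and_T-path_formula} forces any $T$-path from $\alpha$ to $\beta$ to have no even segments, so $\TPath{P}{D}{\alpha}{\beta}$ contains only the trivial path $(\alpha,\beta)$ and both the $T$-path formula and the (vacuous) weak frieze condition at $\{\alpha,\beta\}$ hold automatically.

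Inductive step: when $n(\alpha,\beta) \geq 1$, I fix any $\{\gamma,\delta\} \in D$ crossing $\{\alpha,\beta\}$. A short cyclic-order argument then shows that each of $\{\alpha,\gamma\}, \{\alpha,\delta\}, \{\beta,\gamma\}, \{\beta,\delta\}$ has strictly fewer $D$-crossings than $\{\alpha,\beta\}$: any $D$-diagonal crossing one of these four must also cross $\{\alpha,\beta\}$ (it lies on the appropriate side of $\{\gamma,\delta\}$ by non-crossing of $D$), while $\{\gamma,\delta\}$ crosses $\{\alpha,\beta\}$ but shares an endpoint with each of the four. Thus the induction hypothesis applies to all four shorter diagonals.

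For $(\Rightarrow)$, the weak frieze hypothesis gives the Ptolemy relation at $\{\alpha,\beta\}, \{\gamma,\delta\}$, yielding
\[
f(\alpha,\beta) = \frac{f(\alpha,\gamma) f(\beta,\delta) + f(\alpha,\delta) f(\beta,\gamma)}{f(\gamma,\delta)}.
\]
Expanding each of the four factors on the right by the induction hypothesis, I aim to rewrite the result as $\sum_{\pi \in \TPath{P}{D}{\alpha}{\beta}} f(\pi)$ via a combinatorial bijection
\[
\TPath{P}{D}{\alpha}{\beta} \longleftrightarrow \bigl(\TPath{P}{D}{\alpha}{\gamma} \times \TPath{P}{D}{\delta}{\beta}\bigr) \sqcup \bigl(\TPath{P}{D}{\alpha}{\delta} \times \TPath{P}{D}{\gamma}{\beta}\bigr)
\]
under which $\pi \leftrightarrow (\pi',\pi'')$ satisfies $f(\pi)f(\gamma,\delta) = f(\pi')f(\pi'')$. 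The direction $(\Leftarrow)$ then follows by running the same bijection backwards: the two sides of Ptolemy, after expansion via the $T$-path formula and the induction hypothesis, coincide term-by-term.

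The main obstacle is constructing and verifying this bijection. Given $\pi \in \TPath{P}{D}{\alpha}{\beta}$, the natural recipe is to split $\pi$ at the even segment equal to $\{\gamma,\delta\}$, producing a prefix $T$-path $\alpha \to \gamma$ (or $\delta$) and a suffix $\delta \to \beta$ (or $\gamma$). The delicate case is when $\pi$ does not use $\{\gamma,\delta\}$ as an even segment at all but instead ``skips'' it in favour of another $D$-diagonal; the corresponding pair $(\pi',\pi'')$ then has $\{\gamma,\delta\}$ appearing as an odd segment of $\pi'$ or $\pi''$, and the naive concatenation acquires a duplicated $\{\gamma,\delta\}$ whose $f$-contribution cancels via $f(\gamma,\delta) = f(\delta,\gamma)$, recovering $\pi$. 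Showing that this map is a well-defined bijection which preserves the $f$-identity and exhausts all $T$-paths from $\alpha$ to $\beta$ is the technical heart of the proof.
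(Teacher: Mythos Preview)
Your plan differs substantially from the paper's proof, and the gap you yourself flag---the bijection---is real and more delicate than your sketch suggests.

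\textbf{How the paper proceeds.} The paper does not induct on the number of crossings of a single diagonal, nor does it use an arbitrary $\{\gamma,\delta\}\in D$. Instead it inducts on $|D|$ and always chooses an \emph{ear}: a diagonal $d=\{\zeta,\eta\}\in D$ such that one of the two subpolygons $P_1$ cut off by $d$ contains no further diagonals of $D$ (Setup~\ref{set:ear}). This choice makes the combinatorics of $T$-paths through $d$ essentially trivial: if $\alpha\in U_1$ lies in the ear then Lemma~\ref{lem:U1_to_U2} forces $\pi_2\in\{\zeta,\eta\}$, and Lemma~\ref{lem:17_18} gives clean bijections between $\TPath{P}{D}{\alpha}{\beta}^{(\alpha,\zeta)}$ and $\TPath{P_2}{D_2}{\eta}{\beta}$ (similarly for $\eta$). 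No ``skipping'' case arises, because there are no $D$-diagonals on the ear side to interact with. The ``if'' direction is then closed not by running the bijection backwards, but by invoking Theorem~\ref{thm:B} (gluing of weak friezes): the restrictions $f|_{\diag(P_i)}$ glue to a unique weak frieze $\widetilde f$, and Lemma~\ref{lem:towards_Thm_A} plus the $T$-path formula show $\widetilde f=f$.

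\textbf{Where your bijection runs into trouble.} For an arbitrary $\{\gamma,\delta\}\in D$, the map you describe is not obviously well-defined. Suppose $\{\gamma,\delta\}$ is not an even segment of $\mu\in\TPath{P}{D}{\alpha}{\beta}$ and, at the gap, the relevant endpoint is $\mu_{2k+1}=\gamma$. Your recipe would set $\pi'=(\mu_1,\dots,\gamma,\delta)\in\TPath{P}{D}{\alpha}{\delta}$. But condition~(iii) for $\pi'$ requires each even segment $e_j=\{\mu_{2j},\mu_{2j+1}\}$ with $j\le k$ to cross $\{\alpha,\delta\}$, and this fails whenever $e_j$ has $\delta$ as an endpoint---which is perfectly possible, since $e_j$ need only lie on the $\alpha$-side of $\{\gamma,\delta\}$ and cross $\{\alpha,\beta\}$. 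The same issue obstructs the reverse map: a pair $(\pi',\pi'')$ may involve even segments of $\pi'$ touching $\delta$, and then concatenation does not obviously yield a valid $T$-path satisfying~(iii) for $\{\alpha,\beta\}$. Your one-step ``cancellation'' handles the case where $\{\gamma,\delta\}$ itself is the problematic odd segment at the boundary, but not these deeper interactions. A correct bijection along your lines would need a more careful case analysis (or a better choice of $\{\gamma,\delta\}$, e.g.\ the $D$-diagonal whose crossing with $\{\alpha,\beta\}$ is closest to $\alpha$, which brings you closer to the paper's ear argument).
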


For example, the weak frieze $f$ in Figure \ref{fig:9-gon} has value $1$ on each of the four $T$-paths in Figure \ref{fig:T-paths}, so $f( \alpha,\beta ) = 1+1+1+1 = 4$ by the $T$-path formula.

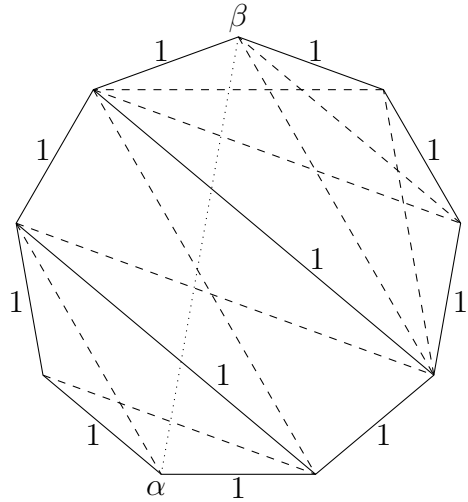
\begin{figure}
\begingroup
\[
  \begin{tikzpicture}[auto]
    \node[name=s, shape=regular polygon, regular polygon sides=9, minimum size=6cm, draw] {}; 

    \draw (90+2*360/9:3cm) to (90+5*360/9:3cm);
    \draw (90+1*360/9:3cm) to (90+6*360/9:3cm);

    \draw[dashed] (90+2*360/9:3cm) to (90+4*360/9:3cm);
    \draw[dashed] (90+3*360/9:3cm) to (90+5*360/9:3cm);

    \draw[dashed] (90+1*360/9:3cm) to (90+5*360/9:3cm);
    \draw[dashed] (90+2*360/9:3cm) to (90+6*360/9:3cm);    

    \draw[dashed] (90+0*360/9:3cm) to (90+6*360/9:3cm);
    \draw[dashed] (90+6*360/9:3cm) to (90+8*360/9:3cm);    
    \draw[dashed] (90+8*360/9:3cm) to (90+1*360/9:3cm);
    \draw[dashed] (90+1*360/9:3cm) to (90+7*360/9:3cm);    
    \draw[dashed] (90+7*360/9:3cm) to (90+0*360/9:3cm);

    \draw[dotted] (90+0*360/9:3cm) to (90+4*360/9:3cm);
    
    \draw (90+0.5*360/9:3cm) node {$1$};
    \draw (90+1.5*360/9:3cm) node {$1$};
    \draw (90+2.5*360/9:3cm) node {$1$};
    \draw (90+3.5*360/9:3cm) node {$1$};
    \draw (90+4.5*360/9:3cm) node {$1$};
    \draw (90+5.5*360/9:3cm) node {$1$};
    \draw (90+6.5*360/9:3cm) node {$1$};
    \draw (90+7.5*360/9:3cm) node {$1$};
    \draw (90+8.5*360/9:3cm) node {$1$};

    \draw (90+0*360/9:3.25cm) node {$\beta$};
    \draw (90+4*360/9:3.2cm) node {$\alpha$};

    \draw (1.05,0.05) node {$1$};
    \draw (-0.2,-1.5) node {$1$};
  \end{tikzpicture} 
\]
\endgroup
\caption{A weak frieze $f$ with respect to the dissection $D$ consisting of the two unbroken diagonals.  It has values in $\Rpos$ and is equal to $1$ on each edge, unbroken diagonal, and dashed diagonal (some of these values are shown).  It has been obtained by gluing together trivial weak friezes on the two squares and the pentagon created by $D$, see Theorem \ref{thm:B}.   The value $f( \alpha,\beta )$ on the dotted diagonal $\{ \alpha,\beta \}$ is $4$, see Section \ref{subsec:T-paths}.
}
\label{fig:9-gon}
\end{figure}

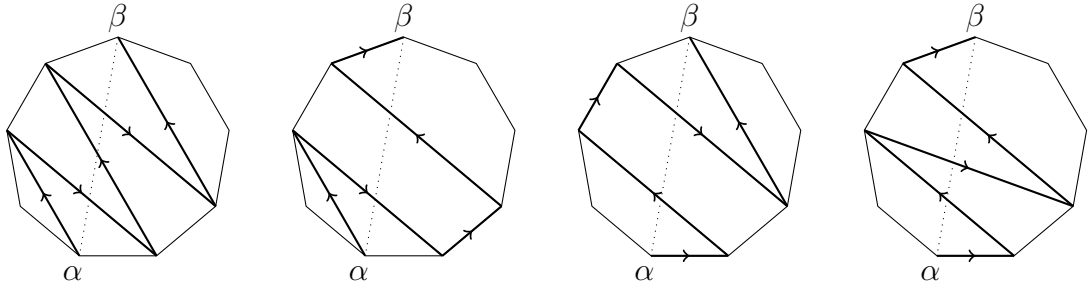
\begin{figure}
\begingroup
\[
  \begin{tikzpicture}[auto]

    \begin{scope}[shift={(-5.7,0)}]
      \node[name=s, shape=regular polygon, regular polygon sides=9, minimum size=3cm, draw] {}; 
      \draw (90+2*360/9:1.5cm) to (90+5*360/9:1.5cm);
      \draw (90+1*360/9:1.5cm) to (90+6*360/9:1.5cm);
      \draw[dotted] (90+0*360/9:1.5cm) to (90+4*360/9:1.5cm);    
      \draw (90+0*360/9:1.75cm) node {$\beta$};
      \draw (90+4*360/9:1.75cm) node {$\alpha$};
      \begin{scope}[thick,decoration={
    markings,mark=at position 0.5 with {\arrow{>}}}] 
        \draw[postaction={decorate}] (90+4*360/9:1.5cm)--(90+2*360/9:1.5cm);
        \draw[postaction={decorate}] (90+2*360/9:1.5cm)--(90+5*360/9:1.5cm);
        \draw[postaction={decorate}] (90+5*360/9:1.5cm)--(90+1*360/9:1.5cm);
        \draw[postaction={decorate}] (90+1*360/9:1.5cm)--(90+6*360/9:1.5cm);
        \draw[postaction={decorate}] (90+6*360/9:1.5cm)--(90+0*360/9:1.5cm);
      \end{scope}
    \end{scope}

    \begin{scope}[shift={(-1.9,0)}]
      \node[name=s, shape=regular polygon, regular polygon sides=9, minimum size=3cm, draw] {}; 
      \draw (90+2*360/9:1.5cm) to (90+5*360/9:1.5cm);
      \draw (90+1*360/9:1.5cm) to (90+6*360/9:1.5cm);
      \draw[dotted] (90+0*360/9:1.5cm) to (90+4*360/9:1.5cm);    
      \draw (90+0*360/9:1.75cm) node {$\beta$};
      \draw (90+4*360/9:1.75cm) node {$\alpha$};
      \begin{scope}[thick,decoration={
    markings,mark=at position 0.5 with {\arrow{>}}}] 
        \draw[postaction={decorate}] (90+4*360/9:1.5cm)--(90+2*360/9:1.5cm);
        \draw[postaction={decorate}] (90+2*360/9:1.5cm)--(90+5*360/9:1.5cm);
        \draw[postaction={decorate}] (90+5*360/9:1.5cm)--(90+6*360/9:1.5cm);
        \draw[postaction={decorate}] (90+6*360/9:1.5cm)--(90+1*360/9:1.5cm);
        \draw[postaction={decorate}] (90+1*360/9:1.5cm)--(90+0*360/9:1.5cm);
      \end{scope}
    \end{scope}

    \begin{scope}[shift={(1.9,0)}]
      \node[name=s, shape=regular polygon, regular polygon sides=9, minimum size=3cm, draw] {}; 
      \draw (90+2*360/9:1.5cm) to (90+5*360/9:1.5cm);
      \draw (90+1*360/9:1.5cm) to (90+6*360/9:1.5cm);
      \draw[dotted] (90+0*360/9:1.5cm) to (90+4*360/9:1.5cm);    
      \draw (90+0*360/9:1.75cm) node {$\beta$};
      \draw (90+4*360/9:1.75cm) node {$\alpha$};
      \begin{scope}[thick,decoration={
    markings,mark=at position 0.5 with {\arrow{>}}}] 
        \draw[postaction={decorate}] (90+4*360/9:1.5cm)--(90+5*360/9:1.5cm);
        \draw[postaction={decorate}] (90+5*360/9:1.5cm)--(90+2*360/9:1.5cm);
        \draw[postaction={decorate}] (90+2*360/9:1.5cm)--(90+1*360/9:1.5cm);
        \draw[postaction={decorate}] (90+1*360/9:1.5cm)--(90+6*360/9:1.5cm);
        \draw[postaction={decorate}] (90+6*360/9:1.5cm)--(90+0*360/9:1.5cm);
      \end{scope}
    \end{scope}

    \begin{scope}[shift={(5.7,0)}]
      \node[name=s, shape=regular polygon, regular polygon sides=9, minimum size=3cm, draw] {}; 
      \draw (90+2*360/9:1.5cm) to (90+5*360/9:1.5cm);
      \draw (90+1*360/9:1.5cm) to (90+6*360/9:1.5cm);
      \draw[dotted] (90+0*360/9:1.5cm) to (90+4*360/9:1.5cm);    
      \draw (90+0*360/9:1.75cm) node {$\beta$};
      \draw (90+4*360/9:1.75cm) node {$\alpha$};
      \begin{scope}[thick,decoration={
    markings,mark=at position 0.5 with {\arrow{>}}}] 
        \draw[postaction={decorate}] (90+4*360/9:1.5cm)--(90+5*360/9:1.5cm);
        \draw[postaction={decorate}] (90+5*360/9:1.5cm)--(90+2*360/9:1.5cm);
        \draw[postaction={decorate}] (90+2*360/9:1.5cm)--(90+6*360/9:1.5cm);
        \draw[postaction={decorate}] (90+6*360/9:1.5cm)--(90+1*360/9:1.5cm);
        \draw[postaction={decorate}] (90+1*360/9:1.5cm)--(90+0*360/9:1.5cm);
      \end{scope}
    \end{scope}

  \end{tikzpicture} 
\]
\endgroup
\caption{In Figure \ref{fig:9-gon}, these are the $T$-paths from $\alpha$ to $\beta$ with respect to the dissection $D$.}
\label{fig:T-paths}
\end{figure}

\smallskip
\subsection{Gluing}~\\
\label{subsec:gluing}
\vspace{-3.4ex}

The proof of Theorem \ref{thm:A} relies on our second main result.

\begin{ThmIntro}
[Gluing weak friezes]
\label{thm:B}
Let $P$ be a polygon, let $d_1, \ldots, d_m$ be pairwise non-crossing internal diagonals dividing $P$ into subpolygons $P_1, \ldots, P_{ m+1 }$, and let $D_i$ be a dissection of $P_i$ for each $i$.  The disjoint union $D = \{ d_1, \ldots, d_m \} \cupdot D_1 \cupdot \cdots \cupdot D_{ m+1 }$ is a dissection of $P$.

Let $f_i : \diag( P_i ) \xrightarrow{} K$ be a weak frieze with respect to $D_i$ for each $i$.  Assume that if $P_i$ and $P_j$ share a diagonal $d$, then $f_i( d ) = f_j( d )$ (note that such a $d$ must be an edge of both $P_i$ and $P_j$).

Then there is a unique weak frieze $f : \diag( P ) \xrightarrow{} K$ with respect to $D$ such that $f\Big|_{ \diag( P_i ) } = f_i$ for each $i$.  
\end{ThmIntro}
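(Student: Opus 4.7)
The plan is induction on $m$, reducing everything to the case $m = 1$. Since the $m+1$ subpolygons glued along the $m$ non-crossing diagonals form a tree in the obvious dual graph (vertices $=$ subpolygons, edges $=$ gluing diagonals), for $m \geq 1$ one can choose a leaf $P_j$ attached to the rest by a single $d_k$. The inductive hypothesis, applied to the complementary polygon $P' := \overline{P \setminus P_j}$ (divided by $d_\ell$ for $\ell \neq k$ into the $P_i$ with $i \neq j$), yields a unique weak frieze $f'$ extending those $f_i$. The compatibility $f'(d_k) = f_i(d_k) = f_j(d_k)$, where $P_i$ is the neighbour of $P_j$ across $d_k$, then reduces the gluing of $f'$ and $f_j$ to the $m = 1$ case.

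For $m = 1$, write $d = \{p, q\}$. Uniqueness and the definition of $f$ are forced simultaneously: on diagonals with both endpoints among the vertices of some $P_i$, the assumption $f|_{\diag(P_i)} = f_i$ dictates $f = f_i$; on any other diagonal $\{\alpha, \beta\}$, up to swap $\alpha$ is a vertex of $P_1 \setminus \{p, q\}$ and $\beta$ of $P_2 \setminus \{p, q\}$, so $\{\alpha, \beta\}$ crosses $d \in D$ and the Ptolemy relation for $\{\alpha, \beta\}$ and $d$ forces
\[
  f(\alpha, \beta) \;=\; \frac{f_1(\alpha, p) f_2(\beta, q) + f_1(\alpha, q) f_2(\beta, p)}{f_1(p,q)},
\]
which is well-defined because $f_1(p,q) = f_2(p,q)$ by compatibility and every element of the semifield $K$ is multiplicatively invertible.

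For existence, the Ptolemy relation must be verified for every pair of crossing diagonals $\{\alpha,\beta\}$ and $\{\gamma,\delta\}$ with $\{\gamma,\delta\} \in D = \{d\} \cupdot D_1 \cupdot D_2$. The case $\{\gamma, \delta\} = d$ is the defining formula; the case $\{\gamma, \delta\} \in D_i$ with $\{\alpha, \beta\}$ a chord of $P_i$ is the weak frieze property of $f_i$. The real content is the mixed case, $\{\gamma, \delta\} \in D_1$ (say) with $\{\alpha, \beta\}$ crossing $d$. Here I would substitute the defining formula into each $f$-value on a chord crossing $d$, clear the factor $f_1(p,q)$, and collect the resulting identity as a combination of the two terms $f_2(\beta, p)$ and $f_2(\beta, q)$. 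When $\gamma, \delta \notin \{p, q\}$, matching coefficients reduces the identity to the Ptolemy relations for $f_1$ on the crossings of $\{\gamma, \delta\}$ with $\{\alpha, p\}$ and $\{\alpha, q\}$; both are genuine crossings in the convex polygon $P_1$, so both hold because $\{\gamma, \delta\} \in D_1$.

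The main obstacle is the degenerate configurations of this mixed subcase, $\gamma \in \{p, q\}$ or $\delta \in \{p, q\}$ (the joint case $\{\gamma, \delta\} = d$ being excluded since $d \notin D_1$). When for instance $\gamma = p$, the chord $\{\beta, \gamma\}$ lies in $\diag(P_2)$ rather than crossing $d$, and $\{\alpha, p\}$ shares the vertex $p$ with $\{\gamma, \delta\}$ instead of crossing it; after simplification the identity collapses to a single Ptolemy relation for $f_1$ on the crossing of $\{p, \delta\} = \{\gamma, \delta\}$ with $\{\alpha, q\}$. A short convexity argument shows that $p$ and $\delta$ lie on opposite sides of $\{\alpha, q\}$ in $P_1$, so this is a genuine crossing and the needed relation is available via the weak frieze property of $f_1$. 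The case $\delta = q$ is symmetric, finishing the proof.
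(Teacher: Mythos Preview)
Your proof is correct and follows essentially the same approach as the paper: reduce to $m=1$ by induction, define $f$ by the formula forced by the Ptolemy relation across $d$, and verify the weak frieze property by a case analysis on where $\alpha,\beta,\gamma,\delta$ lie relative to $d$. The paper's displayed computation treats the mirror case ($\{\gamma,\delta\}\in D_2$ with $\gamma,\delta$ strictly between $\zeta$ and $\eta$) and leaves the remaining cases to the reader, whereas you additionally sketch the degenerate boundary subcase $\gamma\in\{p,q\}$; your ``matching coefficients'' step is sound in the semifield because it is used only in the direction of building the desired identity from the component Ptolemy relations for $f_1$.
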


Our third main result concerns friezes.

\begin{ThmIntro}
[Gluing friezes]
\label{thm:C}
In Theorem \ref{thm:B}, the weak frieze $f$ is a frieze if and only if each $f_i$ is a frieze.  
\end{ThmIntro}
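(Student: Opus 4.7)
The forward implication is immediate: if $f$ is a frieze, then for any pair of crossing diagonals $\{\alpha,\beta\}, \{\gamma,\delta\}$ of $P_i$, these are also crossing diagonals of $P$, so the Ptolemy relation for $f$ restricts to the Ptolemy relation for $f_i = f|_{\diag(P_i)}$, showing $f_i$ is a frieze.

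For the converse, suppose each $f_i$ is a frieze. The plan is to refine the $D_i$ and reapply Theorem \ref{thm:B}. Choose a triangulation $T_i$ of $P_i$ containing $D_i$. Since $f_i$ is a frieze, it is automatically a weak frieze with respect to the larger dissection $T_i$. Applying Theorem \ref{thm:B} with the data $(T_i)$ in place of $(D_i)$ yields a unique weak frieze $g$ on $P$ with respect to the dissection $T := \{d_1, \ldots, d_m\} \cupdot T_1 \cupdot \cdots \cupdot T_{m+1}$ extending the $f_i$. Since $D \subseteq T$, and since being a weak frieze with respect to a larger dissection is a stronger condition, $g$ is also a weak frieze with respect to $D$ extending the $f_i$; uniqueness in Theorem \ref{thm:B} applied to $D$ then forces $g = f$. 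Thus $f$ is a weak frieze with respect to the triangulation $T$ of $P$.

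It remains to prove the auxiliary claim: a weak frieze with respect to a triangulation of $P$ is a frieze. I would prove this via flip invariance. If $T'$ is obtained from $T$ by flipping a diagonal $d_0 \in T$ to the opposite diagonal $d_0'$ of the quadrilateral spanned by the two triangles of $T$ adjacent to $d_0$, then $f$ should remain a weak frieze with respect to $T'$. Iterating, $f$ is a weak frieze with respect to every triangulation of $P$; since each diagonal $d$ of $P$ lies in some triangulation, Ptolemy then holds for every crossing pair, so $f$ is a frieze. The delicate step is establishing flip invariance itself: one must verify Ptolemy on $\{d_0', e\}$ for each $e$ crossing $d_0'$ using only the Ptolemy relations available from $f$'s weak frieze property with respect to $T$. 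The argument uses the Ptolemy relation on the pair $\{d_0, d_0'\}$ (available since $d_0 \in T$) to express $f(d_0')$, then case-analyses the position of $e$ relative to the flip quadrilateral, applying Ptolemy relations on $d_0$ against diagonals involving the endpoints of $e$. Because the value semifield has no subtraction, the computation must be organised so that both sides of the target Ptolemy identity appear as identical semifield sums of the same monomials in the $f$-values, rather than matching only after cancellation.
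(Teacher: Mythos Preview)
Your approach is correct and genuinely different from the paper's.

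The paper reduces only to $m=1$ (a single dividing diagonal $d=\{\zeta,\eta\}$) and then verifies the Ptolemy relation for an arbitrary crossing pair $\{\alpha,\beta\},\{\gamma,\delta\}$ by a direct case analysis on the positions of $\alpha,\beta,\gamma,\delta$ relative to $d$, plugging in the explicit gluing formula from the table in the proof of Theorem~\ref{thm:B} and using the frieze property of $f_1,f_2$. The displayed computation treats the representative case $\zeta<\alpha<\gamma<\eta<\beta<\delta<\zeta$; the remaining cases are similar and left to the reader.

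Your route instead refines each $D_i$ to a triangulation $T_i$, applies Theorem~\ref{thm:B} twice (once for $T$, once for $D$) and uses uniqueness to identify $f$ with the weak frieze glued along $T$, thereby reducing everything to the auxiliary claim that a weak frieze with respect to a \emph{triangulation} is already a frieze. This reduction is clean and valid. The flip-invariance step can indeed be carried out subtraction-free: multiplying the target Ptolemy identity for $\{b,d\}$ and $\{x,y\}$ through by $f(a,c)$ (where $d_0=\{a,c\}$) and expanding, both sides become identical sums of monomials. One small correction to your sketch: the expansion requires Ptolemy relations not only on $d_0$ but also on whichever sides $\{a,b\},\{b,c\},\{c,d\},\{d,a\}$ of the flip quadrilateral happen to lie in $T$ (this occurs exactly when an endpoint of $e$ sits strictly behind such a side). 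With that amendment the case analysis goes through.

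The trade-off is that the paper keeps the computation localised to a single gluing step and never introduces triangulations, whereas your argument isolates a reusable lemma (flip invariance of the weak-frieze property) at the cost of a comparable, though structurally simpler, case analysis.
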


\smallskip
\subsection{Frieze patterns}~\\
\label{subsec:frieze_patterns}
\vspace{-3.4ex}

We end the introduction by explaining how (weak) friezes are related to (generalised) frieze patterns.

Let $n \geqslant 3$ be an integer and consider the coordinate system on a horizontal strip in Figure \ref{fig:coordinates}.  
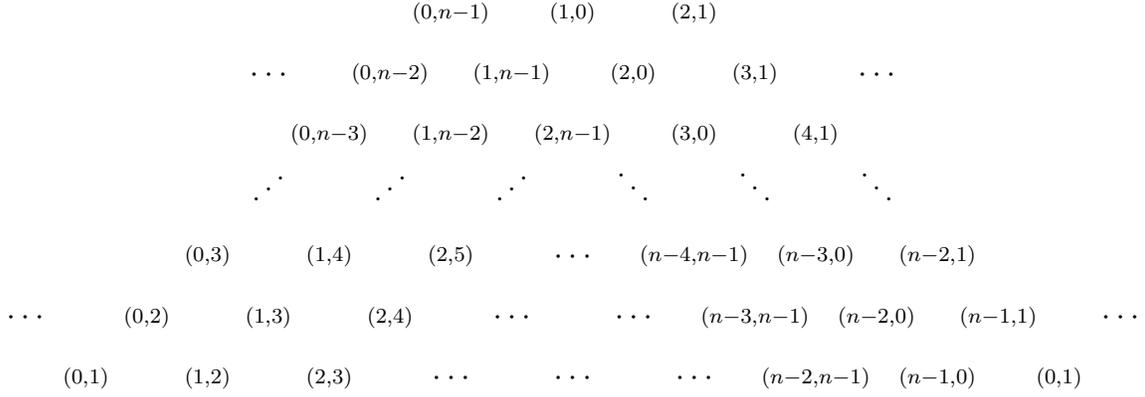
\begin{figure}
\begingroup
\[
  \xymatrix @-3.5pc @! {
    & & & & & & & *{\scriptstyle (0,n-1)} & & *{\scriptstyle (1,0)} & & *{\scriptstyle (2,1)} \\
    & & & & \cdots & & *{\scriptstyle (0,n-2)} & & *{\scriptstyle (1,n-1)} & & *{\scriptstyle (2,0)} & & *{\scriptstyle (3,1)} & & \cdots \\
    & & & & & *{\scriptstyle (0,n-3)} & & *{\scriptstyle (1,n-2)} & & *{\scriptstyle (2,n-1)} & & *{\scriptstyle (3,0)} & & *{\scriptstyle (4,1)} \\
    & & & & \adots & & \adots & & \adots & & \dddots & & \dddots & & \dddots \\
    & & & *{\scriptstyle (0,3)} & & *{\scriptstyle (1,4)} & & *{\scriptstyle (2,5)} & & \cdots & & *{\scriptstyle (n-4,n-1)} & & *{\scriptstyle (n-3,0)} & & *{\scriptstyle (n-2,1)} \\
    \cdots & & *{\scriptstyle (0,2)} & & *{\scriptstyle (1,3)} & & *{\scriptstyle (2,4)} & & \cdots & & \cdots & & *{\scriptstyle (n-3,n-1)} & & *{\scriptstyle (n-2,0)} & & *{\scriptstyle (n-1,1)} & & \cdots \\
    & *{\scriptstyle (0,1)} & & *{\scriptstyle (1,2)} & & *{\scriptstyle (2,3)} & & \cdots & & \cdots & & \cdots & & *{\scriptstyle (n-2,n-1)} & & *{\scriptstyle (n-1,0)} & & *{\scriptstyle (0,1)} \\
               }
\]
\endgroup
\caption{Coordinate system on a horizontal strip used for turning (weak) friezes on the $n$-gon into (generalised) frieze patterns.}
\label{fig:coordinates}
\end{figure}
The coordinates are elements of $\BZ/n$, and taking a step right adds $1$ to each coordinate.  The first coordinate is constant when ascending diagonally, then second when descending diagonally.  A (weak) frieze $f$ on an $n$-gon $P$ is turned into a pattern on the strip by labelling the vertices of $P$ by $\BZ/n$ and placing the value $f( i,j )$ at position $( i,j )$ in the coordinate system.  Note that since $f( i,j ) = f( j,i )$, the pattern has a glide symmetry.

First, it is classic that if $f$ is a frieze with values in $\Rpos$ satisfying $f( i,i+1 ) = 1$ for each $i \in \BZ/n$, then the corresponding pattern is a frieze pattern as defined in \cite[sec.\ 1]{Cox}.  Namely, if $i,j \in \BZ/n$ are non-neighbouring vertices, then $\{ i,j \}$ and $\{ i+1,j+1 \}$ are crossing diagonals and Equation \eqref{equ:Ptolemy} implies
\[
  f( i,j )f( i+1,j+1 ) - f( i,j+1 )f( i+1,j ) = 1.
\]
This is the unimodular equation from  \cite[sec.\ 1]{Cox}, see Figure \ref{fig:diamond}.  Every frieze pattern in the sense of \cite[sec.\ 1]{Cox} arises like this by \cite[thm.\ 2.5]{HJ}.
\begin{figure}
\begingroup
\[
  \xymatrix @! @R-6.5pc @C-4.0pc {
    & f( i,j+1 ) & \\
    f( i,j ) \!\!\!\!\!\!\!\!\!\!\!\!\!\!\!\! & & f( i+1,j+1 ) \\
    & f( i+1,j ) & \\
               }
\]
\endgroup
\caption{If $f$ is a frieze with values in $\Rpos$ satisfying $f( i,i+1 ) = 1$, then $f$ satisfies $f( i,j )f( i+1,j+1 ) - f( i,j+1 )f( i+1,j ) = 1$, the unimodular equation.  The four values of $f$ in the equation constitute a ``diamond'' in the corresponding frieze pattern as shown.}
\label{fig:diamond}
\end{figure}

Secondly, if $f$ is any frieze with values in $\Rpos$, then the corresponding pattern is a frieze pattern with coefficients as defined in \cite[sec.\ 3]{Propp}, see \cite[def.\ 2.1]{CHJ}.  Equation \eqref{equ:Ptolemy} becomes the equation in \cite[def.\ 2.1(iii)]{CHJ}.  Every frieze pattern with coefficients which has positive entries arises like this by \cite[thm.\ 3.3]{CHJ}.

Thirdly, if $f$ is a weak frieze with values in $\BN = \{ 1,2,\ldots \}$ satisfying $f( i,i+1 ) = 1$ for each $i \in \BZ/n$, then the corresponding pattern is a generalised frieze pattern as defined in \cite[sec.\ 5]{BHJ}.  For instance, the weak frieze $f$ in Figure \ref{fig:9-gon} gives the generalised frieze pattern in Figure \ref{fig:9-gon_pattern}.  The entries in the first and last row are $1$ because they are the values $f(i,i+1)$.  The entry $f( \alpha,\beta ) = 4$ is shown in red.
\begin{figure}
\begingroup
\[
  \xymatrix @-2.5pc @! {
    & 1 & & 1 & & 1 & & 1 & & 1 & & 1 & & 1 & & 1 & & 1 & & 1 & & 1 \\
    & & 2 & & 2 & & 1 & & 1 & & 1 & & 2 & & 2 & & 1 & & 1 & & 2 \\
    \cdots & 2 & & 4 & & 2 & & 1 & & 1 & & 2 & & *+[red]{4} & & 2 & & 1 & & 2 & & 4 & \cdots \\
    & & 4 & & 4 & & 2 & & 1 & & 2 & & 4 & & 4 & & 1 & & 1 & & 4 \\
    & 2 & & 4 & & 4 & & 1 & & 1 & & 4 & & 4 & & 2 & & 1 & & 2 & & 4 \\
    \cdots & & 2 & & *+[red]{4} & & 2 & & 1 & & 2 & & 4 & & 2 & & 1 & & 1 & & 2 & & \cdots \\
    & 1 & & 
    2 & & 2 & & 1 & & 1 & & 2 & & 2 & & 1 & & 1 & & 1 & & 2 \\
    & & 
    1 & & 
    1 & & 1 & & 1 & & 1 & & 1 & & 1 & & 1 & & 1 & & 1 \\
               }
\]
\endgroup
\caption{The generalised frieze pattern corresponding to the weak frieze in Figure \ref{fig:9-gon}.}
\label{fig:9-gon_pattern}
\end{figure}

The paper is organised as follows: Section \ref{sec:definitions} collects some definitions, Section \ref{sec:Thm_BC} proves Theorems \ref{thm:B} and \ref{thm:C}, Section \ref{sec:lemmas_on_T-paths} establishes some properties of $T$-paths, and Section \ref{sec:Thm_A} proves Theorem \ref{thm:A}.

\section{Semifields and polygons}
\label{sec:definitions}

Recall that throughout the paper, $( K,+,\cdot )$ is a fixed semifield in the following sense.

\begin{Definition}
\label{def:semifield}
A {\it semifield} is a triple $( K,+,\cdot )$, where $K$ is a set and $+$ and $\:\cdot\:$ are binary operations, satisfying the following.
\begin{itemize}
\setlength\itemsep{4pt}

  \item  The operation $+$ is associative and commutative.

  \item  The operation $\:\cdot\:$ turns $K$ into a commutative group.  The unit element is denoted $1_K$, the inverse of $x$ by $x^{-1}$. 

  \item  The operation $\:\cdot\:$ distributes over $+$.

\end{itemize}
We abbreviate $( K,+,\cdot )$ to $K$ and write $xy := x \cdot y$ and $\frac{x}{y} := x \cdot y^{-1}$.
\end{Definition}

\begin{Definition}
\label{def:polygon}
A {\em polygon} $P$ is a finite set $V$ of three or more {\em vertices} with a cyclic order.

The predecessor and successor of $\alpha \in V$ are denoted $\alpha^-$ and $\alpha^+$.

A {\em diagonal} of $P$ is a two-element subset  of $V$.  The set of diagonals is denoted $\diag(P)$.

The diagonal $\{ \alpha,\beta \}$ has {\em end points} $\alpha$ and $\beta$.  Diagonals of the form $\{ \alpha,\alpha^+ \}$ are called {\em edges}.  The remaining diagonals are called {\em internal diagonals}.

The diagonals $\{ \alpha,\beta \}$ and $\{ \gamma,\delta \}$ {\em cross} if $\alpha,\beta,\gamma,\delta$ are four distinct vertices which appear in the order $\alpha,\gamma,\beta,\delta$ or $\alpha,\delta,\beta,\gamma$ in $V$.  

A {\em subpolygon} is a subset of $V$ of three or more vertices equipped with the induced cyclic order.

A {\em dissection} of $P$ is a set $D$ of pairwise non-crossing internal diagonals.  If $D$ has $m$ elements, then it divides $P$ into $m+1$ subpolygons.  Note that $D$ can be empty.

If $V$ has $n$ elements, then $P$ is called an {\em $n$-gon}, and we often imagine it realised as a convex $n$-angle in the Euclidean plane.  This means that we can make sense of
Definition \ref{def:T-paths_and_T-path_formula}(iii), which says that the diagonals $\{ \pi_{ 2j },\pi_{ 2j+1 } \}$ cross the diagonal $\{ \pi_1,\pi_p \}$ ``at pairwise different points which progress monotonically in the direction from $\pi_1$ to $\pi_p$''.  In our figures of $n$-angles, the positive direction is anticlockwise.
\end{Definition}

\begin{Definition}
[$T$-paths II]
\label{def:T-paths_2}
We extend the notation of Definition \ref{def:T-paths_and_T-path_formula} as follows:  If $\rho_1, \ldots, \rho_t$ are vertices of $P$, then we set
\[
  \TPath{P}{D}{\pi_1}{\pi_p}^{ (\rho_1,\ldots,\rho_t ) }
  =
  \{\, \pi \in \TPath{P}{D}{\pi_1}{\pi_p} \,|\, \pi_1 = \rho_1, \ldots, \pi_t = \rho_t \,\}.
\]
The notation is further extended by permitting the sign $\neq$, for instance,
\[
  \TPath{P}{D}{\pi_1}{\pi_p}^{ (\rho_1,\neq \rho_2 ) }
  =
  \{\, \pi \in \TPath{P}{D}{\pi_1}{\pi_p} \,|\, \pi_1 = \rho_1, \pi_2 \neq \rho_2 \,\}.
\]
\end{Definition}

\section{Proofs of Theorems \ref{thm:B} and \ref{thm:C}}
\label{sec:Thm_BC}

In the proofs of Theorems \ref{thm:B} and \ref{thm:C} we will assume $m = 1$, that is, there is one diagonal $d_1$ dividing $P$ into subpolygons $P_1$ and $P_2$.  This implies the general case by an easy induction.  Note that the proofs do not use subtraction, which is unavailable in the semifield $K$.

Let $P$ have the set of vertices $V$, denote $d_1$ by $d$, and pick $\zeta,\eta \in V$ such that $d = \{ \zeta,\eta \}$ while the sets of vertices of $P_1$ and $P_2$ are $V_1 = \{\, \varepsilon \in V \,|\, \zeta \leqslant \varepsilon \leqslant \eta \,\}$ and $V_2 = \{\, \varepsilon \in V \,|\, \eta \leqslant \varepsilon \leqslant \zeta \,\}$, see Figure \ref{fig:ThmB1}.  Set 
\[
  U_1 = \{\, \varepsilon \in V \,|\, \zeta < \varepsilon < \eta \,\}
  \;\;,\;\;
  U_2 = \{\, \varepsilon \in V \,|\, \eta < \varepsilon < \zeta \,\}.
\]
There are disjoint unions $D = D_1 \cupdot \{ d \} \cupdot D_2 $ and
\[
  V = U_1 \cupdot \{ \zeta,\eta \} \cupdot U_2.
\]

\begin{proof}[Proof of Theorem \ref{thm:B}]
Since $d = \{ \zeta,\eta \}$ is in $D$, by Definition \ref{def:friezes_and_weak_friezes}(ii) a weak frieze $f$ with respect to $D$ must satisfy
\[
  f( \zeta,\eta )f( \alpha,\beta )
  =
  f( \zeta,\alpha )f( \eta,\beta )
  +
  f( \zeta,\beta )f( \eta,\alpha )
\]
when $\{ \zeta,\eta \}$ and $\{ \alpha,\beta \}$ are crossing diagonals of $P$.  If $f$ also satisfies $f\Big|_{ \diag( P_i ) } = f_i$ for $i \in \{ 1,2 \}$ and $x := f_1( \zeta,\eta ) = f_2( \zeta,\eta )$, then $f( \alpha,\beta )$ must be given by the entries of the following table, according to whether $\alpha$ and $\beta$ are in $U_1$, $U_2$, or $\{ \zeta,\eta \}$.
\label{page:with_table}
\[
\mbox{
\begin{tabular}{c|cccccc}
  \diaghead{\theadfont xxxxxxxxx}{$\alpha$}{$\beta$} & $\scriptstyle U_1$ & $\scriptstyle \{ \zeta,\eta \}$ & $\scriptstyle U_2$ \\[2mm] \cline{1-4}
  $\scriptstyle U_1$ & $\scriptstyle f_1( \alpha,\beta )$ & $\scriptstyle f_1( \alpha,\beta )$ & $\scriptstyle x^{-1}[ f_1( \zeta,\alpha )f_2( \eta,\beta ) + f_2( \zeta,\beta )f_1( \eta,\alpha ) ]$ \\[2mm]
  $\scriptstyle \{ \zeta,\eta \}$ & $\scriptstyle f_1( \alpha,\beta )$ & $\scriptstyle f_1( \alpha,\beta ) = f_2( \alpha,\beta )$ & $\scriptstyle f_2( \alpha,\beta )$ \\[2mm]
  $\scriptstyle U_2$ & $\scriptstyle x^{-1}[ f_1( \zeta,\beta )f_2( \eta,\alpha ) + f_2( \zeta,\alpha )f_1( \eta,\beta ) ]$ & $\scriptstyle f_2( \alpha,\beta )$ & $\scriptstyle f_2( \alpha,\beta )$ \\
\end{tabular}
}
\]
This shows uniqueness of the weak frieze $f$ claimed in Theorem \ref{thm:B}.  

To show existence of $f$, let $f( \alpha,\beta )$ be defined by the table.  We must show that $f$ is a weak frieze with respect to $D$; that is, for crossing diagonals $\{ \alpha,\beta \}$ and $\{ \gamma,\delta \}$ with $\{ \gamma,\delta \} \in D$, Equation \eqref{equ:Ptolemy} holds.  It is necessary to treat a number of cases.  We leave most of them to the reader, but show the computation for the case
\[
  \zeta < \alpha < \eta < \gamma < \beta < \delta < \zeta,
\]
see Figure \ref{fig:ThmB1} (left), in which
\begin{align}
\label{equ:ThmB3}
  \alpha \in U_1, \\
\label{equ:ThmB4}
  \beta,\gamma,\delta \in U_2.
\end{align}

\begin{figure}
\begingroup
\[
  \begin{tikzpicture}[auto]
    \begin{scope}[shift={(-4.2,0)}]
      \node[name=s, shape=regular polygon, regular polygon sides=20, minimum size=6cm, draw] {}; 
      \draw (8.5*360/20:3.25cm) node {$\zeta$};
      \draw (11.5*360/20:3.25cm) node {$\alpha$};
      \draw (15.5*360/20:3.25cm) node {$\eta$};
      \draw (19.5*360/20:3.25cm) node {$\gamma$};
      \draw (21.5*360/20:3.25cm) node {$\beta$};
      \draw (23.5*360/20:3.25cm) node {$\delta$};
      \draw (8.5*360/20:3cm) to (15.5*360/20:3cm);
      \draw (-1.7,-1.3) node {$P_1$};
      \draw (0.8,0.8) node {$P_2$};
    \end{scope}
    
    \begin{scope}[shift={(4.2,0)}]
      \node[name=s, shape=regular polygon, regular polygon sides=20, minimum size=6cm, draw] {}; 
      \draw (8.5*360/20:3.25cm) node {$\zeta$};
      \draw (10.5*360/20:3.25cm) node {$\alpha$};
      \draw (13.5*360/20:3.25cm) node {$\gamma$};
      \draw (15.5*360/20:3.25cm) node {$\eta$};
      \draw (20.5*360/20:3.25cm) node {$\beta$};
      \draw (23.5*360/20:3.25cm) node {$\delta$};
      \draw (8.5*360/20:3cm) to (15.5*360/20:3cm);
      \draw (-1.7,-1.3) node {$P_1$};
      \draw (0.8,0.8) node {$P_2$};  
    \end{scope}
    
  \end{tikzpicture} 
\]
\endgroup
\caption{The configurations in the proofs of Theorems \ref{thm:B} and \ref{thm:C}.}
\label{fig:ThmB1}
\end{figure}
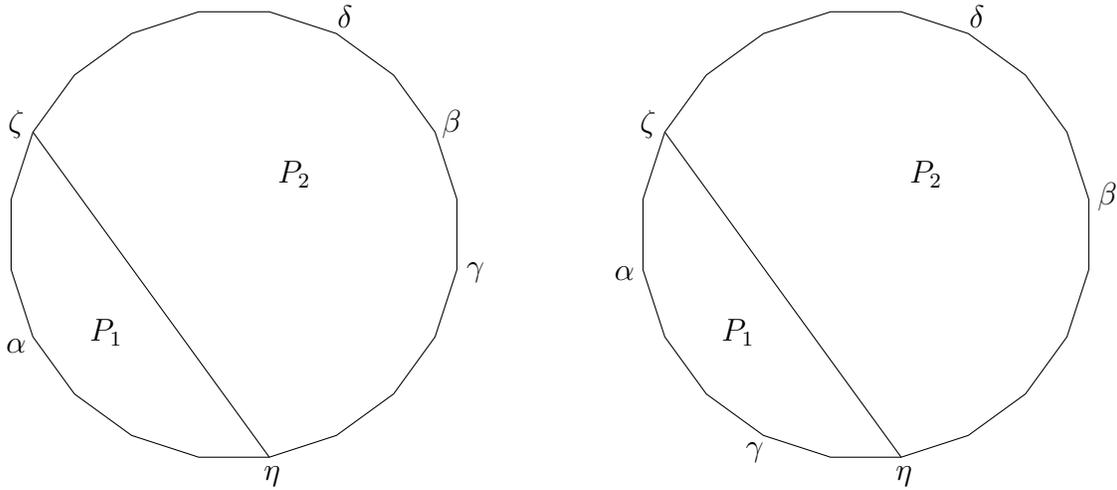

Equation \eqref{equ:ThmB4} implies that $\{ \gamma,\delta \}$, $\{ \zeta,\beta \}$, $\{ \eta,\beta \}$ are diagonals of $P_2$, so in particular $\{ \gamma,\delta \} \in D_2$.  Since $\{ \gamma,\delta \}$ crosses each of $\{ \zeta,\beta \}$ and $\{ \eta,\beta \}$ while $f_2$ is a weak frieze with respect to $D_2$, we get the following equations.
\begin{align}
\label{equ:ThmB1}
  f_2( \zeta,\beta )f_2( \gamma,\delta )
  & = f_2( \zeta,\gamma )f_2( \beta,\delta )
    + f_2( \zeta,\delta )f_2( \beta,\gamma ) \\
\label{equ:ThmB2}
  f_2( \eta,\beta )f_2( \gamma,\delta )
  & = f_2( \eta,\gamma )f_2( \beta,\delta )
    + f_2( \eta,\delta )f_2( \beta,\gamma )
\end{align}
The definition of $f$ and Equations \eqref{equ:ThmB3} and \eqref{equ:ThmB4} give
\begin{equation}
\label{equ:ThmB5}
  f( \alpha,\beta )
  = x^{-1}
    \big[ f_1( \zeta,\alpha )f_2( \eta,\beta )
    + f_2( \zeta,\beta )f_1( \eta,\alpha ) \big]
\end{equation}
and analogous equations where $\beta$ is replaced with $\gamma$ or $\delta$.
Hence
\begin{align*}
  \begingroup \color{red}
  f( \alpha,\beta )
  \endgroup f( \gamma,\delta )
  & \stackrel{\rm (a)}{=} x^{-1}
    \big[ f_1( \zeta,\alpha )f_2( \eta,\beta )
    + f_2( \zeta,\beta )f_1( \eta,\alpha ) \big] f_2( \gamma,\delta ) \\[1.75mm]
  & = x^{-1} f_1( \zeta,\alpha )
  \begingroup \color{red}
  f_2( \eta,\beta )f_2( \gamma,\delta )
  \endgroup \\
  & \;\;\;\; + x^{-1} f_1( \eta,\alpha )
  \begingroup \color{red}
  f_2( \zeta,\beta )f_2( \gamma,\delta ) 
  \endgroup \\[1.75mm]
  & \stackrel{\rm (b)}{=} x^{-1} f_1( \zeta,\alpha ) \big[ f_2( \eta,\gamma )f_2( \beta,\delta ) + f_2( \eta,\delta )f_2( \beta,\gamma ) \big] \\
  & \;\;\;\; +
    x^{-1} f_1( \eta,\alpha ) \big[ f_2( \zeta,\gamma )f_2( \beta,\delta ) + f_2( \zeta,\delta )f_2( \beta,\gamma ) \big] \\[1.75mm]
  & = 
  \begingroup \color{red}
  x^{-1} \big[ f_1( \zeta,\alpha )f_2( \eta,\delta ) + f_2( \zeta,\delta )f_1( \eta,\alpha ) \big]
  \endgroup f_2( \beta,\gamma ) \\
  & \;\;\;\; + 
  \begingroup \color{red}
  x^{-1} \big[ f_1( \zeta,\alpha )f_2( \eta,\gamma ) + f_2( \zeta,\gamma ) f_1( \eta,\alpha ) \big] 
  \endgroup f_2( \beta,\delta ) \\[1.75mm]
  & \stackrel{\rm (c)}{=} f( \alpha,\delta )f( \beta,\gamma ) + f( \alpha,\gamma )f( \beta,\delta ) 
\end{align*}
as desired, where each red factor is replaced in the subsequent step.  For (a) and (c), use Equation \eqref{equ:ThmB5} and analogous equations where $\beta$ is replaced with $\gamma$ or $\delta$.  For (b), use Equations \eqref{equ:ThmB1} and \eqref{equ:ThmB2}.  The remaining equalities are simple computations.
\end{proof}

\begin{proof}[Proof of Theorem \ref{thm:C}]
``Only if'' is clear.

``If'':  Assume that $f_1$ and $f_2$ are friezes.  We must show that $f$ is a frieze; that is, for crossing diagonals $\{ \alpha,\beta \}$ and $\{ \gamma,\delta \}$, Equation \eqref{equ:Ptolemy} holds.  It is necessary to treat a number of cases.  We leave most of them to the reader, but show the computation for the case
\[
  \zeta < \alpha < \gamma < \eta < \beta < \delta < \zeta,
\]
see Figure \ref{fig:ThmB1} (right), in which
\begin{align}
\label{equ:ThmC1}
  \alpha,\gamma \in U_1, \\
\label{equ:ThmC2}
  \beta,\delta \in U_2.
\end{align}

Equation \eqref{equ:ThmC1} implies that $\{ \zeta,\gamma \}$, $\{ \eta,\alpha \}$ are diagonals of $P_1$.  Since they cross while $f_1$ is a frieze, we get the first of the following equations, and the second follows by an analogous argument.
\begin{align}
\label{equ:ThmC4}
  f_1( \zeta,\gamma )f_1( \eta,\alpha )
  & = f_1( \zeta,\eta )f_1( \gamma,\alpha )
    + f_1( \zeta,\alpha )f_1( \gamma,\eta ) \\
\label{equ:ThmC5}
  f_2( \zeta,\beta )f_2( \eta,\delta )
  & = f_2( \zeta,\eta )f_2( \beta,\delta )
    + f_2( \zeta,\delta )f_2( \beta,\eta )
\end{align}
Equations \eqref{equ:ThmC1} and \eqref{equ:ThmC2} mean that Equation \eqref{equ:ThmB5} is still valid, and so are the analogous equations where the pair $( \alpha,\beta )$ is replaced by one of $( \alpha,\delta )$, $( \gamma,\beta )$, $( \gamma,\delta )$.  Hence
\begin{align*}
  & 
  \begingroup \color{red}
  x^2f( \alpha,\beta )f( \gamma,\delta ) 
  \endgroup
  \\[1.75mm]
  & \;\; \stackrel{\rm (a)}{=} \big[ f_1( \zeta,\alpha )f_2( \eta,\beta ) + f_2( \zeta,\beta )f_1( \eta,\alpha ) \big] \big[ f_1( \zeta,\gamma )f_2( \eta,\delta ) + f_2( \zeta,\delta )f_1( \eta,\gamma ) \big] \\[1.75mm]
  & \;\; = f_1( \zeta,\alpha )f_2( \eta,\beta )f_1( \zeta,\gamma )f_2( \eta,\delta )
  + f_1( \zeta,\alpha )f_2( \eta,\beta )f_2( \zeta,\delta )f_1( \eta,\gamma ) \\
  & \;\;\;\;\;\;
  + \begingroup \color{red}
  f_2( \zeta,\beta )
  \endgroup
  f_1( \eta,\alpha )f_1( \zeta,\gamma )
  \begingroup \color{red}
  f_2( \eta,\delta )
  \endgroup
  + f_2( \zeta,\beta )f_1( \eta,\alpha )f_2( \zeta,\delta )f_1( \eta,\gamma ) \\[1.75mm]
  & \;\; \stackrel{\rm (b)}{=} f_1( \zeta,\alpha )f_2( \eta,\beta )f_1( \zeta,\gamma )f_2( \eta,\delta )
  + f_1( \zeta,\alpha )f_2( \eta,\beta )f_2( \zeta,\delta )f_1( \eta,\gamma ) \\
  & \;\;\;\;\;\; + \big[ f_2( \zeta,\eta )f_2( \beta,\delta ) + f_2( \zeta,\delta )f_2( \beta,\eta ) \big]f_1( \eta,\alpha )f_1( \zeta,\gamma ) \\
  & \;\;\;\;\;\; + f_2( \zeta,\beta )f_1( \eta,\alpha )f_2( \zeta,\delta )f_1( \eta,\gamma ) \\[1.75mm]
  & \;\; = f_1( \zeta,\alpha )f_2( \eta,\beta )f_1( \zeta,\gamma )f_2( \eta,\delta )
  + f_1( \zeta,\alpha )f_2( \eta,\beta )f_2( \zeta,\delta )f_1( \eta,\gamma ) \\
  & \;\;\;\;\;\; + f_2( \zeta,\eta )f_2( \beta,\delta )
  \begingroup \color{red}
  f_1( \eta,\alpha )f_1( \zeta,\gamma )
  \endgroup + f_2( \zeta,\delta )f_2( \beta,\eta )f_1( \eta,\alpha )f_1( \zeta,\gamma ) \\
  & \;\;\;\;\;\; + f_2( \zeta,\beta )f_1( \eta,\alpha )f_2( \zeta,\delta )f_1( \eta,\gamma ) \\[1.75mm]
  & \;\; \stackrel{\rm (c)}{=} f_1( \zeta,\alpha )f_2( \eta,\beta )f_1( \zeta,\gamma )f_2( \eta,\delta )
  + f_1( \zeta,\alpha )f_2( \eta,\beta )f_2( \zeta,\delta )f_1( \eta,\gamma ) \\
  & \;\;\;\;\;\; + f_2( \zeta,\eta )f_2( \beta,\delta ) \big[ f_1( \zeta,\eta )f_1( \gamma,\alpha ) + f_1( \zeta,\alpha )f_1( \gamma,\eta ) \big] \\
  & \;\;\;\;\;\; + f_2( \zeta,\delta )f_2( \beta,\eta )f_1( \eta,\alpha )f_1( \zeta,\gamma ) + f_2( \zeta,\beta )f_1( \eta,\alpha )f_2( \zeta,\delta )f_1( \eta,\gamma ) \\[1.75mm]
  & \;\; = f_1( \zeta,\alpha )f_2( \eta,\beta )f_1( \zeta,\gamma )f_2( \eta,\delta ) + f_2( \zeta,\eta )f_2( \beta,\delta )f_1( \zeta,\eta )f_1( \gamma,\alpha ) \\
  & \;\;\;\;\;\; + f_2( \zeta,\delta )f_2( \beta,\eta )f_1( \eta,\alpha )f_1( \zeta,\gamma ) + f_2( \zeta,\beta )f_1( \eta,\alpha )f_2( \zeta,\delta )f_1( \eta,\gamma ) \\
  & \;\;\;\;\;\; + f_1( \zeta,\alpha )f_1( \eta,\gamma )
\begingroup \color{red}
\big[ f_2( \eta,\beta )f_2( \zeta,\delta ) + f_2( \zeta,\eta )f_2( \beta,\delta ) \big]
\endgroup
  \\[1.75mm]
  & \;\; \stackrel{\rm (d)}{=} f_1( \zeta,\alpha )f_2( \eta,\beta )f_1( \zeta,\gamma )f_2( \eta,\delta ) + f_2( \zeta,\eta )f_2( \beta,\delta )f_1( \zeta,\eta )f_1( \gamma,\alpha ) \\
  & \;\;\;\;\;\; + f_2( \zeta,\delta )f_2( \beta,\eta )f_1( \eta,\alpha )f_1( \zeta,\gamma ) + f_2( \zeta,\beta )f_1( \eta,\alpha )f_2( \zeta,\delta )f_1( \eta,\gamma ) \\
  & \;\;\;\;\;\; + f_1( \zeta,\alpha )f_1( \eta,\gamma )f_2( \zeta,\beta )f_2( \eta,\delta )
  \\[1.75mm]
  & \;\; = x^2 f( \gamma,\alpha )f_2( \beta,\delta ) \\
  & \;\;\;\;\;\; + 
  \begingroup \color{red}
  \big[ f_1( \zeta,\alpha )f_2( \eta,\delta ) + f_2( \zeta,\delta )f_1( \eta,\alpha ) \big]
  \big[ f_1( \zeta,\gamma )f_2( \eta,\beta ) + f_2( \zeta,\beta )f_1( \eta,\gamma ) \big] 
  \endgroup
  \\[1.75mm]
  & \;\; \stackrel{\rm (e)}{=} x^2 f( \alpha,\gamma )f( \beta,\delta ) + x^2 f( \alpha,\delta )f( \beta,\gamma ).
\end{align*}
Multiplying by $x^{-2}$ gives Equation \eqref{equ:Ptolemy} as desired.  Each red factor is replaced in the subsequent step. To get (a) and (e), use Equation \eqref{equ:ThmB5} and analogous equations where the pair $( \alpha,\beta )$ is replaced by one of $( \alpha,\delta )$, $( \gamma,\beta )$, $( \gamma,\delta )$.  To get (b) and (d), use Equation \eqref{equ:ThmC5}.  To get (c), use Equation \eqref{equ:ThmC4}.  The remaining equalities are simple computations.
\end{proof}

\section{Lemmas on $T$-paths}
\label{sec:lemmas_on_T-paths}

Recall that $T$-paths, the notation $\TPath{P}{D}{\alpha}{\beta}$, and the $T$-path formula were introduced in De\-fi\-ni\-ti\-on \ref{def:T-paths_and_T-path_formula}.  Additional notation was introduced in Definition \ref{def:T-paths_2}, and we will use this material without further comment.

\begin{Lemma}
\label{lem:inversion}
Let $P$ be a polygon with a dissection $D$ and let $\alpha \neq \beta$ be vertices of $P$.
\begin{enumerate}
\setlength\itemsep{4pt}

  \item  There is a bijection $\TPath{P}{D}{\alpha}{\beta} \xrightarrow{} \TPath{P}{D}{\beta}{\alpha}$ given by $( \pi_1, \ldots, \pi_p ) \mapsto ( \pi_p, \ldots, \pi_1 )$.
  
  \item  If $f : \diag(P) \xrightarrow{} K$ is a map, then
\[
  \sum_{ \pi \in \TPath{P}{D}{\alpha}{\beta} } f( \pi )
  =
  \sum_{ \rho \in \TPath{P}{D}{\beta}{\alpha} } f( \rho ).
\]

\end{enumerate}
\end{Lemma}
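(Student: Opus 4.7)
The plan is to establish (i) via the tuple-reversal map $\pi = (\pi_1, \ldots, \pi_p) \mapsto \rho = (\pi_p, \ldots, \pi_1)$, and then deduce (ii) immediately once one verifies that $f(\pi) = f(\rho)$. The crucial preliminary observation is that the length $p$ of any $T$-path must be even: if $p$ were odd, condition (iii) of Definition \ref{def:T-paths_and_T-path_formula} applied with $j = (p-1)/2$ would demand that $\{\pi_{p-1}, \pi_p\}$ cross $\{\pi_1, \pi_p\}$, but these two diagonals share the vertex $\pi_p$ and therefore cannot cross. So $p$ is even.

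With $p$ even, set $\rho_i = \pi_{p+1-i}$. The consecutive diagonals of $\rho$ are exactly those of $\pi$ in reverse order, so conditions (i) and (ii) of Definition \ref{def:T-paths_and_T-path_formula} transfer to $\rho$ at once. For condition (iii), note that position $k$ in $\rho$ corresponds to position $p - k$ in $\pi$; because $p$ is even, $k$ and $p - k$ have the same parity, so each even-indexed diagonal of $\rho$ is an even-indexed diagonal of $\pi$, hence belongs to $D$. The crossing points on the segment $\{\pi_1, \pi_p\} = \{\rho_p, \rho_1\}$ are the same points in both cases, and monotonicity from $\rho_1$ to $\rho_p$ follows from monotonicity from $\pi_1$ to $\pi_p$ simply by reversing the direction of traversal along the segment. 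The reversal is manifestly an involution, which yields the required bijection and proves (i).

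For (ii), by the bijection in (i) it suffices to check $f(\pi) = f(\rho)$ for each $\pi$. The same parity-preservation argument shows that the odd-indexed diagonals of $\rho$ coincide as a set with the odd-indexed diagonals of $\pi$, and similarly for even indices; so the numerator and denominator in \eqref{equ:f_on_T-paths} computed for $\pi$ and for $\rho$ are identical, as required. The only non-routine step in the whole argument is the parity observation for $p$; once that is in hand, everything else is formal.
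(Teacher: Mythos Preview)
Your proof is correct and follows essentially the same approach as the paper: both arguments hinge on the observation that $p$ must be even (since otherwise $\{\pi_{p-1},\pi_p\}$ would have to cross $\{\pi_1,\pi_p\}$ despite sharing the vertex $\pi_p$), after which the reversal bijection and the equality $f(\pi)=f(\rho)$ follow by parity-preservation of indices. The paper's version is terser, simply stating that the evenness of $p$ ``makes it easy to check part (i)'', whereas you spell out the verification of each condition in Definition~\ref{def:T-paths_and_T-path_formula}; but the underlying argument is the same.
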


\begin{proof}
If $( \pi_1, \ldots, \pi_p ) \in \TPath{P}{D}{\alpha}{\beta}$, then $\{ \pi_{ p-1 },\pi_p \}$ shares an end point with $\{ \alpha,\beta \} = \{ \pi_1,\pi_p \}$.  It follows that these diagonals do not cross, so $p-1$ is odd by Definition \ref{def:T-paths_and_T-path_formula}(iii).  Hence $p$ is even, which makes it easy to check part (i), and implies that Equation \eqref{equ:f_on_T-paths} gives $f\big( ( \pi_1, \ldots, \pi_p ) \big) = f\big( ( \pi_p, \ldots, \pi_1 ) \big)$.  Combining this equation with part (i) proves part (ii). 
\end{proof}

\begin{Lemma}
\label{lem:trivial_T-paths}
Let $P$ be a polygon with a dissection $D$ and assume that the diagonal $\{ \alpha,\beta \}$ crosses no diagonal in $D$.  Then:
\begin{enumerate}
\setlength\itemsep{4pt}

  \item  $\TPath{P}{D}{\alpha}{\beta} = \{ ( \alpha,\beta ) \}$.

  \item  If $f : \diag(P) \xrightarrow{} K$ is a map, then Equation \eqref{equ:the_T-path_formula} holds.

\end{enumerate}
\end{Lemma}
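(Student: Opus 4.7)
The plan is to reduce both parts to a direct unpacking of Definition \ref{def:T-paths_and_T-path_formula} using the hypothesis that $\{\alpha,\beta\}$ crosses no diagonal in $D$. First, for part (i), I would take an arbitrary $\pi = (\pi_1,\ldots,\pi_p) \in \TPath{P}{D}{\alpha}{\beta}$ and show that $p=2$. The parity observation from the proof of Lemma \ref{lem:inversion} already gives that $p$ is even (since $\{\pi_{p-1},\pi_p\}$ shares endpoint $\pi_p$ with $\{\alpha,\beta\}$, the two diagonals cannot cross, forcing $p-1$ to be odd).

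If one had $p \geq 4$, then by Definition \ref{def:T-paths_and_T-path_formula}(iii) the diagonal $\{\pi_2,\pi_3\}$ would lie in $D$ and would cross $\{\pi_1,\pi_p\} = \{\alpha,\beta\}$, contradicting the hypothesis. Hence $p=2$, forcing $\pi = (\alpha,\beta)$. Conversely, $(\alpha,\beta)$ itself trivially satisfies conditions (i)--(iii) of Definition \ref{def:T-paths_and_T-path_formula} (the latter two being vacuous), so it is indeed a $T$-path, establishing the set equality in (i).

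For part (ii), the singleton conclusion of (i) collapses the right-hand side of \eqref{equ:the_T-path_formula} to the single term $f\big((\alpha,\beta)\big)$. Evaluating \eqref{equ:f_on_T-paths} with $p=2$ leaves only the odd index $i=1$ in the numerator and an empty product in the denominator, so $f\big((\alpha,\beta)\big) = f(\alpha,\beta)$ and the formula holds.

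There is essentially no obstacle: the argument is a short, definitional one, and the only subtlety worth flagging is that the hypothesis immediately outlaws any even-indexed diagonal of the $T$-path, which is what forces the length down to $p=2$.
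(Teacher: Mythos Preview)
Your proof is correct and essentially the same as the paper's: both observe that Definition~\ref{def:T-paths_and_T-path_formula}(iii) would force $\{\pi_2,\pi_3\}$ to be a diagonal in $D$ crossing $\{\alpha,\beta\}$, which the hypothesis forbids, so $p=2$. The parity detour via Lemma~\ref{lem:inversion} is harmless but unnecessary, since the paper's argument already rules out all $p\geqslant 3$ directly.
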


\begin{proof}
(i):  If $\pi = ( \pi_1, \ldots, \pi_p ) \in \TPath{P}{D}{\alpha}{\beta}$ then by Definition \ref{def:T-paths_and_T-path_formula}(iii) there is no diagonal which could appear as $\{ \pi_2,\pi_3 \}$, since $\{ \alpha,\beta \}$ crosses no diagonal in $D$.    Hence $p = 2$ and the only possibility is $\pi = ( \alpha,\beta )$, which is indeed in $\TPath{P}{D}{\alpha}{\beta}$.  

(ii):  Equation \eqref{equ:the_T-path_formula} holds because the sum on the right hand side has only the term $f( \alpha,\beta )$ by part (i) of the lemma.
\end{proof}

\begin{Setup}
[Ears]
\label{set:ear}
Let $P$ be a polygon, $V$ the set of vertices of $P$, and let $D$ be a non-empty dissection of $P$.  It is easy to see that we can accomplish the following setup, which is sketched in Figure \ref{fig:P1P2} and will be assumed for the rest of Section \ref{sec:lemmas_on_T-paths}:
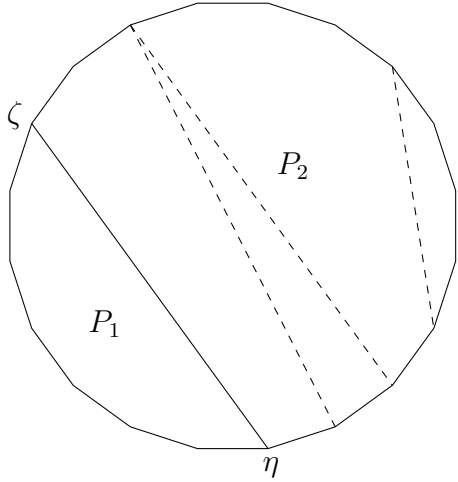
\begin{figure}
\begingroup
\[
  \begin{tikzpicture}[auto]
    \node[name=s, shape=regular polygon, regular polygon sides=20, minimum size=6cm, draw] {}; 
    \draw (8.5*360/20:3.25cm) node {$\zeta$};
    \draw (15.5*360/20:3.25cm) node {$\eta$};
    \draw (8.5*360/20:3cm) to (15.5*360/20:3cm);
    \draw [dashed] (6.5*360/20:3cm) to (16.5*360/20:3cm);
    \draw [dashed] (6.5*360/20:3cm) to (17.5*360/20:3cm);
    \draw [dashed] (2.5*360/20:3cm) to (18.5*360/20:3cm);
    \draw (-1.7,-1.3) node {$P_1$};
    \draw (0.8,0.8) node {$P_2$};
  \end{tikzpicture} 
\]
\endgroup
\caption{The polygon $P$ is divided into subpolygons $P_1$ and $P_2$ by the diagonal $d = \{ \zeta,\eta \}$.  The dissection $D$ of $P$ has the form $D = \{ d \} \cupdot D_2$, where $D_2$ (dashed) is a dissection of $P_2$.}
\label{fig:P1P2}
\end{figure}

The diagonal $d = \{ \zeta,\eta \}$ in $D$ divides $P$ into subpolygons $P_1$ and $P_2$ such that $D = \{ d \} \cupdot D_2$ for a dissection $D_2$ of $P_2$.  The sets of vertices of $P_1$ and $P_2$ are
\[
  V_1 = \{\, \varepsilon \in V \,|\, \zeta \leqslant \varepsilon \leqslant \eta \,\}
  \;\;,\;\;
  V_2 = \{\, \varepsilon \in V \,|\, \eta \leqslant \varepsilon \leqslant \zeta \,\}, 
\]
and we set
\[
  U_1 = \{\, \varepsilon \in V \,|\, \zeta < \varepsilon < \eta \,\}
  \;\;,\;\;
  U_2 = \{\, \varepsilon \in V \,|\, \eta < \varepsilon < \zeta \,\}.
\]
There are disjoint unions
\[
  V
  \;=\;
  V_1 \cupdot U_2
  \;=\;
  U_1 \cupdot V_2
  \;=\;
  U_1 \cupdot \{ \zeta,\eta \} \cupdot U_2. 
\]
Observe that no diagonal in $D$ has an end point in $U_1$.  In particular, $D$ contains no internal diagonals of $P_1$.  Accordingly, we say that $P_1$ is an {\em ear}.  
\end{Setup}

\begin{Lemma}
\label{lem:14}
Assume that $\alpha,\beta \in V_2$.  Then $\TPath{P}{D}{\alpha}{\beta} = \TPath{P_2}{D_2}{\alpha}{\beta}$.
\end{Lemma}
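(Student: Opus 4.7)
The plan is to prove the two inclusions separately. For $\TPath{P_2}{D_2}{\alpha}{\beta} \subseteq \TPath{P}{D}{\alpha}{\beta}$, I would observe that any diagonal of $P_2$---that is, any two-element subset of $V_2$---either shares an endpoint with $d = \{\zeta,\eta\}$, equals $d$, or has both endpoints in $U_2$; in every case it does not cross $d$. Hence if $\pi = (\pi_1,\ldots,\pi_p) \in \TPath{P_2}{D_2}{\alpha}{\beta}$, the conditions in Definition \ref{def:T-paths_and_T-path_formula} transfer directly from $P_2$ to $P$: (i) and (iii) use only that $D_2 \subseteq D$ and that the crossing condition is intrinsic, and (ii) uses that each $\{\pi_i,\pi_{i+1}\}$, being a diagonal of $P_2$, crosses neither $d$ nor any element of $D_2$.

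For the reverse inclusion, the key step is to show that if $\pi \in \TPath{P}{D}{\alpha}{\beta}$, then every $\pi_i$ lies in $V_2$. Two geometric observations drive this. First, since $\alpha,\beta \in V_2$ and $U_1$ is a contiguous arc in the cyclic order on $V$, the chord $\{\alpha,\beta\}$ has all of $U_1$ on one side, so any diagonal crossing $\{\alpha,\beta\}$ has at most one endpoint in $U_1$. Second, no diagonal in $D = \{d\} \cupdot D_2$ has an endpoint in $U_1$: $d$ has endpoints $\zeta,\eta$, and each element of $D_2$ has both endpoints in $V_2$. Combining these, the even diagonals $\{\pi_{2j},\pi_{2j+1}\}$, which by Definition \ref{def:T-paths_and_T-path_formula}(iii) lie in $D$ and cross $\{\pi_1,\pi_p\} = \{\alpha,\beta\}$, have both endpoints in $V_2$. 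Together with $\pi_1 = \alpha,\pi_p = \beta \in V_2$ and the fact that $p$ is even (as noted in the proof of Lemma \ref{lem:inversion}), this accounts for every index.

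Once every $\pi_i$ lies in $V_2$, the $T$-path conditions in $P_2$ with respect to $D_2$ follow readily: every $\{\pi_i,\pi_{i+1}\}$ is a diagonal of $P_2$; condition (i) transfers verbatim; condition (ii) in $P_2$ is weaker than condition (ii) in $P$ since $D_2 \subseteq D$; and each even diagonal must lie in $D_2$ rather than equal $d$, because $d$ does not cross $\{\alpha,\beta\}$ (the vertices $\alpha,\beta \in V_2$ cannot interleave with $\zeta,\eta$ in the cyclic order). I expect the main subtlety to be articulating the geometric claim that $U_1$ lies on a single side of the chord $\{\alpha,\beta\}$, but this is a direct consequence of $U_1$ being a contiguous arc in the cyclic order together with $\alpha,\beta \in V_2$ lying outside it.
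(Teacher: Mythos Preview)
Your proof is correct and follows essentially the same approach as the paper: both rely on the key observation from Setup~\ref{set:ear} that no diagonal in $D$ has an endpoint in $U_1$, so every even step $\{\pi_{2j},\pi_{2j+1}\}$ of a $T$-path in $\TPath{P}{D}{\alpha}{\beta}$ has both endpoints in $V_2$, which together with $\pi_1,\pi_p\in V_2$ and $p$ even covers all vertices. Your first geometric observation (that $U_1$ lies entirely on one side of $\{\alpha,\beta\}$) is in fact unnecessary here, since the second observation alone already forces both endpoints of each even diagonal into $V_2$.
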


\begin{proof}
It is easy to show that the inclusion $\supseteq$ holds, and that for the inclusion $\subseteq$ we only need to show that each $\pi = ( \pi_1, \ldots, \pi_p ) \in \TPath{P}{D}{\alpha}{\beta}$ satisfies $\pi_i \in V_2$ for each $i$.  Assume the opposite and pick $\pi = ( \pi_1, \ldots, \pi_p ) \in \TPath{P}{D}{\alpha}{\beta}$ with $\pi_j \in U_1$ for some $j$.  Observe that $2 \leqslant j \leqslant p-1$ because $\pi_1 = \alpha$ and $\pi_p = \beta$ are in $V_2$ by assumption.  However, no diagonal in $D$ has an end point in $U_1$, so the diagonals $\{ \pi_{ j-1 },\pi_j \}$ and $\{ \pi_j,\pi_{ j+1 } \}$ are not in $D$, contradicting Definition \ref{def:T-paths_and_T-path_formula}(iii).
\end{proof}

Lemma \ref{lem:14} has the following immediate consequence.

\begin{Lemma}
\label{lem:T-path_formula_on_P2}
If $f : \diag(P) \xrightarrow{} K$ satisfies the $T$-path formula with respect to $D$, then $f\Big|_{ \diag( P_2 ) }$ satisfies the $T$-path formula with respect to $D_2$, see Definition \ref{def:T-paths_and_T-path_formula}.
\end{Lemma}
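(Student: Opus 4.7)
The proof plan is essentially a one-line application of Lemma \ref{lem:14}, together with the observation that $\diag(P_2)$ consists precisely of the two-element subsets of $V_2$, so $f\big|_{\diag(P_2)}$ is well-defined and takes the same values as $f$ on any such diagonal.

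The approach is as follows. Fix arbitrary distinct vertices $\alpha, \beta \in V_2$; these are exactly the vertices of $P_2$, so to verify the $T$-path formula for $f\big|_{\diag(P_2)}$ with respect to $D_2$, it suffices to check
\[
  f(\alpha,\beta) \;=\; \sum_{\pi \in \TPath{P_2}{D_2}{\alpha}{\beta}} f(\pi).
\]
Since $f$ satisfies the $T$-path formula with respect to $D$, the left hand side equals $\sum_{\pi \in \TPath{P}{D}{\alpha}{\beta}} f(\pi)$. By Lemma \ref{lem:14}, $\TPath{P}{D}{\alpha}{\beta} = \TPath{P_2}{D_2}{\alpha}{\beta}$, so this sum equals the desired right hand side. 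Finally, for any $T$-path $\pi = (\pi_1, \ldots, \pi_p)$ in this common set, every vertex $\pi_i$ lies in $V_2$, so each diagonal $\{\pi_i, \pi_{i+1}\}$ belongs to $\diag(P_2)$, and consequently $f(\pi)$ as computed from \eqref{equ:f_on_T-paths} uses only values of $f\big|_{\diag(P_2)}$.

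There is no real obstacle here: the only substantive content has already been isolated in Lemma \ref{lem:14}, which rules out $T$-paths in $P$ from $\alpha$ to $\beta$ that visit $U_1$, because no diagonal of $D$ has an end point in $U_1$ (recall $P_1$ is an ear). Once that combinatorial fact is available, restriction to $P_2$ is formal.
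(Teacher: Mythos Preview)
Your proof is correct and takes essentially the same approach as the paper, which simply states that the lemma is an immediate consequence of Lemma~\ref{lem:14}. You have spelled out the details of that immediate consequence accurately.
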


\begin{Lemma}
\label{lem:10}
Assume that $\alpha \in U_1$, $\beta \in U_2$ and let $\pi = ( \pi_1, \ldots, \pi_p ) \in \TPath{P}{D}{\alpha}{\beta}^{ (\alpha,\zeta,\neq \eta ) }$ be given.  Then $\eta < \pi_i \leqslant \zeta$ for $i \geqslant 2$.
\end{Lemma}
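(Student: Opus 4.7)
The plan is to analyse $\pi$ step by step, exploiting the fact from Setup \ref{set:ear} that $P_1$ is an ear, so no diagonal of $D$ has an endpoint in $U_1$. First I would apply Definition \ref{def:T-paths_and_T-path_formula}(iii) at $j = 1$ to obtain $\{\pi_2, \pi_3\} = \{\zeta, \pi_3\} \in D$. Since $\pi_3 \neq \eta$ this diagonal is not $d$, so it lies in $D_2$; because every diagonal in $D_2$ has both endpoints in $V_2$, we deduce $\pi_3 \in V_2$, and then $\pi_3 \in U_2$ as $\pi_3 \notin \{\zeta, \eta\}$. Condition (iii) also demands that $\{\zeta, \pi_3\}$ cross $\{\alpha, \beta\}$; checking the four possible cyclic positions of $\pi_3$ relative to $\zeta, \alpha, \eta, \beta$ pins down $\eta < \pi_3 < \beta$.

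Next I would show that $\{\pi_{2j}, \pi_{2j+1}\} \neq d$ for every $j \geq 1$. For $j = 1$ this follows from $\pi_3 \neq \eta$. For $j \geq 2$ I use the monotonicity in (iii): parameterising $\{\alpha, \beta\}$ from $\alpha$ to $\beta$, the diagonal $d$ crosses at some interior parameter $t_d$, whereas the chord $\{\zeta, \pi_3\}$ lies in the closed subpolygon $P_2$ and meets $d$ only at the vertex $\zeta$, not at the interior point $t_d$; hence $\{\zeta, \pi_3\}$ crosses $\{\alpha, \beta\}$ at a parameter strictly greater than $t_d$. Monotonicity then gives that every later crossing parameter is also strictly greater than $t_d$, ruling out $\{\pi_{2j}, \pi_{2j+1}\} = d$. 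Thus $\{\pi_{2j}, \pi_{2j+1}\} \in D_2$ for all $j \geq 1$, forcing $\pi_{2j}, \pi_{2j+1} \in V_2$; combined with $\pi_p = \beta \in U_2 \subseteq V_2$, this establishes $\pi_i \in V_2$ for every $i \geq 2$.

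It remains to show that no $\pi_i$ equals $\eta$. The cases $i = 2, 3, p$ are handled by $\pi_2 = \zeta$, the hypothesis $\pi_3 \neq \eta$, and $\pi_p = \beta \in U_2$. Suppose $\pi_i = \eta$ for some $i \in \{2j, 2j+1\}$ with $j \geq 2$, and let $\gamma$ be the other endpoint of $\{\pi_{2j}, \pi_{2j+1}\} \in D_2$. Since the diagonal is not $d$, $\gamma \neq \zeta$, so $\gamma \in V_2 \setminus \{\eta, \zeta\} = U_2$. The requirement that $\{\eta, \gamma\}$ cross $\{\alpha, \beta\}$ then forces $\gamma$ into the cyclic arc $(\beta, \zeta)$. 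But then in the cyclic order $\zeta, \alpha, \eta, \pi_3, \beta, \gamma$ the endpoints of $\{\zeta, \pi_3\}$ and $\{\eta, \gamma\}$ alternate, so these two chords cross, contradicting that $D_2$ is a dissection. Hence $\pi_i \in V_2 \setminus \{\eta\} = \{\zeta\} \cup U_2$, i.e.\ $\eta < \pi_i \leq \zeta$, for all $i \geq 2$.

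The main obstacle is the final step: one must recognise that the restriction $\pi_3 \neq \eta$, which puts $\pi_3$ strictly between $\eta$ and $\beta$, together with the non-crossing condition on $D_2$, makes $\eta$ unavailable as a later endpoint, because any diagonal $\{\eta, \gamma\} \in D_2$ crossing $\{\alpha, \beta\}$ would have to cross $\{\zeta, \pi_3\}$ as well. The monotonicity in (iii) is what allows the ruling out of $d$ itself in the intermediate step.
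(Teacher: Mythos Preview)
Your proof is correct, but it is organised differently from the paper's.

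The paper argues in one pass: by monotonicity in Definition~\ref{def:T-paths_and_T-path_formula}(iii), each later diagonal $\{\pi_{2j},\pi_{2j+1}\}$ crosses $\{\alpha,\beta\}$ no closer to $\alpha$ than $\{\pi_2,\pi_3\}$ does, so at least one endpoint $\gamma$ lies on the arc $[\pi_3,\pi_2]=[\pi_3,\zeta]$.  If the other endpoint $\delta$ lay on the complementary arc $(\pi_2,\pi_3)$, then either $\{\gamma,\delta\}$ would cross $\{\pi_2,\pi_3\}\in D$ (violating condition~(ii)) or, when $\gamma\in\{\pi_2,\pi_3\}$, would cross $\{\alpha,\beta\}$ on the wrong side of the first crossing (violating monotonicity).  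This yields the slightly stronger inclusion $\pi_3\leqslant\pi_i\leqslant\zeta$ for $i\geqslant 2$ directly.

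Your route instead separates the two exclusions $\pi_i\notin U_1$ and $\pi_i\neq\eta$.  For the first you compare crossing parameters against $d$ itself (rather than against $\{\pi_2,\pi_3\}$), using that $\{\zeta,\pi_3\}$ sits inside $P_2$; this gives $\{\pi_{2j},\pi_{2j+1}\}\in D_2$ and hence $\pi_i\in V_2$.  For the second you exploit the sharper localisation $\eta<\pi_3<\beta$ (which the paper does not state explicitly) to show that any $\{\eta,\gamma\}\in D_2$ crossing $\{\alpha,\beta\}$ would also cross $\{\zeta,\pi_3\}\in D_2$, contradicting that $D_2$ is a dissection.  This is a valid and natural decomposition, and it makes the role of the ear structure ($D=\{d\}\cupdot D_2$) more visible; the paper's single case split is a little shorter and delivers the marginally stronger bound $\pi_i\geqslant\pi_3$, though that extra strength is not needed downstream.
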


\begin{proof}
Observe that $p \geqslant 3$.  By Definition \ref{def:T-paths_and_T-path_formula}(iii) the diagonal $\{ \pi_2,\pi_3 \}$ is in $D$, so both end points are in $V_2$.  
Since $\pi_2 = \zeta$ and $\pi_3 \neq \eta$, it follows that $\eta < \pi_3 < \zeta$, see Figure \ref{fig:10}.
\begin{figure}
\begingroup
\[
  \begin{tikzpicture}[auto]
    \node[name=s, shape=regular polygon, regular polygon sides=20, minimum size=6cm, draw] {}; 
    \draw (3.5+8.5*360/20:3.65cm) node {$\pi_2 = \zeta$};
    \draw (-3+11.5*360/20:3.70cm) node {$\pi_1 = \alpha$};
    \draw (15.5*360/20:3.25cm) node {$\eta$};
    \draw (17.5*360/20:3.25cm) node {$\;\;\; \pi_3$};
    \draw (21.5*360/20:3.25cm) node {$\beta$};
    \draw (8.5*360/20:3cm) to (15.5*360/20:3cm);
    \draw[dashed] (11.5*360/20:3cm) to (8.5*360/20:3cm);
    \draw[dashed] (8.5*360/20:3cm) to (17.5*360/20:3cm);
    \draw[dotted] (11.5*360/20:3cm) to (21.5*360/20:3cm);
    \draw (-1.2,-1.8) node {$P_1$};
    \draw (-0.2,0.8) node {$P_2$};
  \end{tikzpicture} 
\]
\endgroup
\caption{The $T$-path $\pi$ in the proof of Lemma \ref{lem:10} starts as shown (dashed).}
\label{fig:10}
\end{figure}
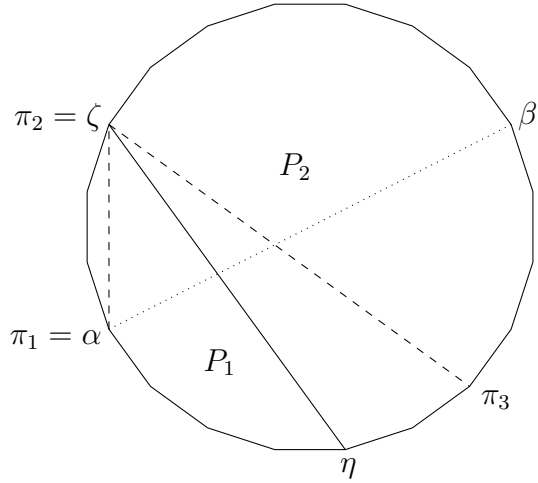

By Definition \ref{def:T-paths_and_T-path_formula}(iii), the diagonals $\{ \pi_{ 2j },\pi_{ 2j+1 } \}$ are in $D$, and they cross the diagonal $\{ \alpha,\beta \}$ at pairwise different points which progress monotonically from $\alpha$ to $\beta$.  It follows that each $\{ \pi_{ 2j },\pi_{ 2j+1 } \}$ has at least one end point, which we will denote $\gamma$, satisfying $\pi_3 \leqslant \gamma \leqslant \pi_2$ whence $\eta < \gamma \leqslant \zeta$, see Figure \ref{fig:10}.  It is enough to show that the other end point of $\{ \pi_{ 2j },\pi_{ 2j+1 } \}$, which we will denote $\delta$, also satisfies $\pi_3 \leqslant \delta \leqslant \pi_2$. 

Assuming the opposite, we have $\pi_2 < \delta < \pi_3$.  If we had $\pi_3 < \gamma < \pi_2$, then $\{ \pi_{ 2j },\pi_{ 2j+1 } \} = \{ \gamma,\delta \}$ would cross $\{ \pi_2,\pi_3 \}$ which is in $D$, contradicting Definition \ref{def:T-paths_and_T-path_formula}(ii).  If we had $\gamma$ equal to $\pi_3$ or $\pi_2$, then $\{ \pi_{ 2j },\pi_{ 2j+1 } \} = \{ \gamma,\delta \}$ would cross $\{ \alpha,\beta \}$ closer to $\alpha$ than $\{ \pi_2,\pi_3 \}$ does, contradicting Definition \ref{def:T-paths_and_T-path_formula}(iii).  
\end{proof}

\begin{Lemma}
\label{lem:U1_to_U2}
Assume that $\alpha \in U_1$, $\beta \in U_2$ and let $\pi = ( \pi_1, \ldots, \pi_p ) \in \TPath{P}{D}{\alpha}{\beta}$ be given.  Then $\pi_2 \in \{ \zeta,\eta \}$.  
\end{Lemma}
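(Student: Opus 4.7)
The plan is to exploit two facts from Setup \ref{set:ear}: the diagonal $d = \{\zeta,\eta\}$ lies in $D$ and separates $U_1$ from $U_2$, and no diagonal in $D$ has an endpoint in $U_1$ (so every diagonal in $D$ has both endpoints in $V_2 = U_2 \cup \{\zeta,\eta\}$). Combining these facts with the defining conditions of a $T$-path will leave $\pi_2 \in \{\zeta,\eta\}$ as the only possibility.

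First I would dispose of the degenerate length $p=2$. If $p=2$, the path would consist of the single edge $\{\alpha,\beta\}$, which crosses $d \in D$ because $\alpha \in U_1$ and $\beta \in U_2$; this contradicts Definition \ref{def:T-paths_and_T-path_formula}(ii). Hence $p \geqslant 3$, so $\pi_3$ is defined.

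Next, applying Definition \ref{def:T-paths_and_T-path_formula}(iii) with $j=1$ shows that $\{\pi_2,\pi_3\} \in D$, whence $\pi_2 \in V_2 = U_2 \cup \{\zeta,\eta\}$ by the observation recorded in Setup \ref{set:ear}. It then remains to rule out $\pi_2 \in U_2$: in that case the edge $\{\pi_1,\pi_2\} = \{\alpha,\pi_2\}$ would have one endpoint in $U_1$ and the other in $U_2$, so it would cross $d \in D$, once more contradicting Definition \ref{def:T-paths_and_T-path_formula}(ii). This forces $\pi_2 \in \{\zeta,\eta\}$ and completes the argument.

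I do not anticipate any real obstacle here; the proof is a direct unpacking of the $T$-path conditions against the structural observations of the setup, and the only subtlety is remembering to rule out the $p=2$ case before invoking condition (iii).
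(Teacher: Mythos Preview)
Your proof is correct and uses essentially the same ingredients as the paper's: condition (ii) rules out $\pi_2 \in U_2$ (since $\{\alpha,\pi_2\}$ would cross $d$), and condition (iii) rules out $\pi_2 \in U_1$ (since $\{\pi_2,\pi_3\}$ must lie in $D$, which has no endpoints in $U_1$). The only cosmetic difference is the order: the paper first excludes $U_2$, deduces $\pi_2 \in V_1$ and hence $p \geqslant 3$, and then excludes $U_1$; you first establish $p \geqslant 3$ directly, then exclude $U_1$, then $U_2$.
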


\begin{proof}
Since $\pi_1 = \alpha \in U_1$, we must have $\pi_2 \in V_1$, since the alternative $\pi_2 \in U_2$ would imply that $\{ \pi_1,\pi_2 \}$ crossed $\{ \zeta,\eta \}$, contradicting Definition \ref{def:T-paths_and_T-path_formula}(ii) since $\{ \zeta,\eta \}$ is in $D$.  Note that $\pi_2 \in V_1$ implies $\pi_2 \neq \beta$ so $p \geqslant 3$.  Assuming $\pi_2 \not\in \{ \zeta,\eta \}$, we have $\pi_2 \in U_1$ whence $\{ \pi_2,\pi_3 \}$ is not in $D$, contradicting Definition \ref{def:T-paths_and_T-path_formula}(iii).
\end{proof}

\begin{Lemma}
\label{lem:17_18}
Assume that $\alpha \in U_1$, $\beta \in U_2$ and let $f : \diag(P) \xrightarrow{} K$ be a map.
\begin{enumerate}
\setlength\itemsep{4pt}

  \item  There is a bijection
\[
  \TPath{P}{D}{\alpha}{\beta}^{ ( \alpha,\zeta,\neq \eta ) }
  \xrightarrow{R} 
  \TPath{P}{D}{\eta}{\beta}^{ ( \eta,\zeta ) }
\]
given by $R( \alpha,\zeta,\pi_3, \ldots, \pi_p ) = ( \eta,\zeta,\pi_3, \ldots, \pi_p )$, which satisfies
\begin{equation}
\label{equ:R}
  \frac{ f( \alpha,\zeta ) }{ f( \eta,\zeta ) }
  f\big( R( \pi ) \big)
  =
  f( \pi ).
\end{equation}

  \item  There is a bijection
\[
  \TPath{P}{D}{\alpha}{\beta}^{ ( \alpha,\zeta,\eta ) }
  \xrightarrow{S} 
  \TPath{P}{D}{\eta}{\beta}^{ ( \eta,\neq \zeta ) }
\]
given by $S( \alpha,\zeta,\eta,\pi_4, \ldots, \pi_p ) = ( \eta,\pi_4, \ldots, \pi_p )$, which satisfies
\begin{equation}
\label{equ:S}
  \frac{ f( \alpha,\zeta ) }{ f( \zeta,\eta ) }
  f\big( S( \pi ) \big)
  =
  f( \pi ).
\end{equation}

\end{enumerate}
\end{Lemma}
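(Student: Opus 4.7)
The plan is first to observe that formulas \eqref{equ:R} and \eqref{equ:S} are straightforward cancellations in \eqref{equ:f_on_T-paths}: the two tuples in each part differ only in the first (resp.\ first two) entries, so the ratio of the alternating products of $f$-values collapses to $f(\alpha,\zeta)/f(\eta,\zeta)$ in part (i) and to $f(\alpha,\zeta)/f(\zeta,\eta)$ in part (ii). The substantive point is therefore to show that $R$ and $S$ are well-defined bijections between the stated subsets of $\TPath{P}{D}{\alpha}{\beta}$ and $\TPath{P}{D}{\eta}{\beta}$. In each case I will use the obvious inverse---$R^{-1}$ restores $\alpha$ in place of $\eta$, while $S^{-1}$ prepends $(\alpha,\zeta)$---and verify that both the map and its inverse preserve the three $T$-path conditions of Definition \ref{def:T-paths_and_T-path_formula}; bijectivity is then automatic.

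For part (i), the only diagonal of $R(\pi)$ that differs from a diagonal of $\pi$ is the first, which becomes $d \in D$. Condition (i) holds because $\pi_3 \neq \eta$ and, by Lemma \ref{lem:10}, every $\pi_i$ with $i \geq 2$ lies in $V_2 \setminus \{\eta\}$, so no other diagonal of $R(\pi)$ involves $\eta$; condition (ii) is immediate as $d$ is itself a member of the dissection. For condition (iii), the same application of Lemma \ref{lem:10} shows that each $D$-diagonal $\{\pi_{2j},\pi_{2j+1}\}$ has both endpoints in $V_2 \setminus \{\eta\}$, and any such chord separates $V_2 \setminus \{\eta\}$ in the same way relative to $\{\alpha,\beta\}$ and to $\{\eta,\beta\}$; hence it crosses one target chord iff it crosses the other. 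Because the $D$-diagonals are pairwise non-crossing, they enter the convex region cut off by $\{\alpha,\beta\}$, $\{\eta,\beta\}$, and the $U_1$-arc from $\alpha$ to $\eta$ in an order that matches as one approaches the common corner $\beta$, so monotonicity transfers from $\{\alpha,\beta\}$ to $\{\eta,\beta\}$. The verification for $R^{-1}$ is symmetric.

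For part (ii), Lemma \ref{lem:10} is unavailable because $\pi_3 = \eta$, and the main work is to show $\pi_i \in V_2 \setminus \{\eta\}$ for all $i \geq 4$; once this holds, the common-crossing observation from part (i) again converts monotonicity on $\{\alpha,\beta\}$ to monotonicity on $\{\eta,\beta\}$. The preliminary fact $\pi_4 \neq \zeta$---needed so that $S(\pi)$ lies in $\TPath{P}{D}{\eta}{\beta}^{(\eta,\neq\zeta)}$---is forced by (i), since $\pi_4 = \zeta$ would duplicate $d = \{\pi_2,\pi_3\}$. For the main avoidance claim, I will take the smallest $k \geq 2$ with $\{\pi_{2k},\pi_{2k+1}\} = \{\eta,w\}$ for some $w$. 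The case $k = 2$ collapses by another duplicate-diagonal argument, so $k \geq 3$, and the preceding $D$-diagonal $\{\pi_{2k-2},\pi_{2k-1}\} = \{c,d\}$ has endpoints in $V_2 \setminus \{\eta\}$ by minimality, with $c$ on the $\eta$-side and $d$ on the $\zeta$-side of $\{\alpha,\beta\}$. Non-crossing of $\{c,d\}$ and $\{\eta,w\}$ in the dissection $D$ places $d$ strictly between $w$ and $\beta$ on the $\zeta$-side; the standard pencil-of-chords comparison through the common vertex $d$ then yields $t_{\{c,d\}} \geq t_{\{\eta,d\}} > t_{\{\eta,w\}}$ for the parameter $t$ along $\{\alpha,\beta\}$ from $\alpha$ to $\beta$, contradicting the monotonic order required by (iii). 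For $S^{-1}$ the situation is simpler: any $\tau \in \TPath{P}{D}{\eta}{\beta}^{(\eta,\neq\zeta)}$ has $\tau_i \in V_2 \setminus \{\eta\}$ for $i \geq 2$ automatically, since a $D$-diagonal sharing the endpoint $\eta$ with the target chord $\{\eta,\beta\}$ could never cross it; prepending $(\alpha,\zeta)$ then preserves all three conditions, with $d$'s crossing of $\{\alpha,\beta\}$ coming first at the point where $\{\alpha,\beta\}$ enters $P_2$. The principal obstacle is this avoidance argument for $\eta$ in the forward direction of part (ii): it is the only step in which the dissection's non-crossing axiom and the $T$-path's monotonicity interact in an essential way.
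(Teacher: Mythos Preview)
Your argument is correct and, for part (i), follows the paper's approach closely: you invoke Lemma \ref{lem:10} to confine $\pi_i$ to $V_2\setminus\{\eta\}$ for $i\geq 2$, exactly as the paper does, and the transfer of the monotone-crossing condition between the chords $\{\alpha,\beta\}$ and $\{\eta,\beta\}$ is the same geometric observation (the paper's Figure \ref{fig:17_18a}). Your ``symmetric'' treatment of $R^{-1}$ glosses over the fact that the paper uses Lemma \ref{lem:14} rather than Lemma \ref{lem:10} in that direction, but the content is equivalent.

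For part (ii), the paper gives no details beyond ``similar methods'', so your explicit argument goes further than the paper does. Your key point---that no even diagonal $\{\pi_{2k},\pi_{2k+1}\}$ with $k\geq 2$ can have $\eta$ as an endpoint---is exactly what is needed, and your contradiction via the non-crossing of $\{\pi_{2k-2},\pi_{2k-1}\}$ and $\{\eta,w\}$ followed by a pencil-of-chords comparison is sound. Two small corrections: first, your endpoint name $d$ collides with the diagonal $d=\{\zeta,\eta\}$ already fixed in Setup \ref{set:ear}; choose a different letter. Second, ``strictly between $w$ and $\beta$'' is slightly too strong: the non-crossing condition allows the shared-endpoint case $e=w$, giving only $\beta<e\leq w$. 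This does not damage the conclusion, since the inequality $t_{\{c,e\}}>t_{\{\eta,e\}}$ is strict (because $c\neq\eta$), and so $t_{\{c,e\}}>t_{\{\eta,e\}}\geq t_{\{\eta,w\}}$ still yields the desired strict contradiction with monotonicity.
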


\begin{proof}
Parts (i) and (ii) of the lemma can be proved by similar methods, and we only show the proof of (i).

Step 1:  $R$ maps into $\TPath{P}{D}{\eta}{\beta}^{ ( \eta,\zeta ) }$:  Let $\pi = ( \alpha,\zeta,\pi_3, \ldots, \pi_p ) \in \TPath{P}{D}{\alpha}{\beta}^{ (\alpha,\zeta,\neq \eta ) }$ be given.  We must show that $\rho := R( \pi ) = ( \eta,\zeta,\pi_3, \ldots, \pi_p )$ is in $\TPath{P}{D}{\eta}{\beta}^{ ( \eta,\zeta ) }$.

The first two vertices of $\rho$ are indeed $\rho_1 = \eta$, $\rho_2 = \zeta$.  It is hence enough to prove that $\rho$ satisfies Definition \ref{def:T-paths_and_T-path_formula}, parts (i)-(iii).  Part (ii) for $\rho$ is clear from part (ii) for $\pi$ combined with $\{ \eta,\zeta \} \in D$.  

Definition \ref{def:T-paths_and_T-path_formula}(i) holds for $\rho$ because it holds for $\pi$ while $\rho_i = \pi_i \neq \eta$ for $i \geqslant 3$.  Indeed, Lemma \ref{lem:10} says
\[
  \eta < \pi_i \leqslant \zeta
  \;\;\mbox{for}\;\;
  i \geqslant 2.
\]
This inequality also implies Definition \ref{def:T-paths_and_T-path_formula}(iii) for $\rho$, because it means that since the diagonals $\{ \rho_{ 2j },\rho_{ 2j+1 } \} = \{ \pi_{ 2j },\pi_{ 2j+1 } \}$ cross $\{ \alpha,\beta \}$ at pairwise different points which progress monotonically in the direction from $\alpha$ to $\beta$, they cross $\{ \eta,\beta \}$ at pairwise different points which progress monotonically in the direction from $\eta$ to $\beta$, see Figure \ref{fig:17_18a}.
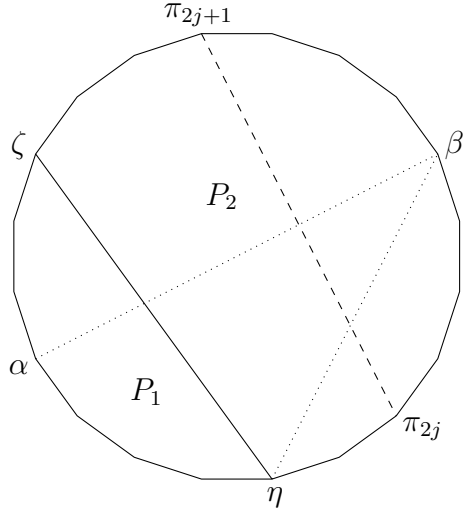
\begin{figure}
\begingroup
\[
  \begin{tikzpicture}[auto]
    \node[name=s, shape=regular polygon, regular polygon sides=20, minimum size=6cm, draw] {}; 
    \draw (8.5*360/20:3.25cm) node {$\zeta$};
    \draw (11.5*360/20:3.25cm) node {$\alpha$};
    \draw (15.5*360/20:3.25cm) node {$\eta$};
    \draw (17.5*360/20:3.25cm) node {$\;\;\; \pi_{ 2j }$};
    \draw (21.5*360/20:3.25cm) node {$\beta$};
    \draw (5.5*360/20:3.25cm) node {$\pi_{ 2j+1 }$};
    \draw (8.5*360/20:3cm) to (15.5*360/20:3cm);
    \draw[dotted] (11.5*360/20:3cm) to (21.5*360/20:3cm);
    \draw[dotted] (15.5*360/20:3cm) to (21.5*360/20:3cm);
    \draw[dashed] (5.5*360/20:3cm) to (17.5*360/20:3cm);
    \draw (-1.2,-1.8) node {$P_1$};
    \draw (-0.2,0.8) node {$P_2$};
  \end{tikzpicture} 
\]
\endgroup
\caption{The figure concerns Lemma \ref{lem:17_18}.  In Step 1 of the proof the diagonals $\{ \pi_{ 2j },\pi_{ 2j+1 } \}$ cross $\{ \alpha,\beta \}$, and since $\eta < \pi_i \leqslant \zeta$ for $i \geqslant 2$ they also cross $\{ \eta,\beta \}$.  In Step 2 of the proof the diagonals $\{ \rho_{ 2j },\rho_{ 2j+1 } \}$ cross $\{ \eta,\beta \}$, and since $\eta \leqslant \rho_i \leqslant \zeta$ for $i \geqslant 1$ they also cross $\{ \alpha,\beta \}$.}
\label{fig:17_18a}
\end{figure}

Step 2:  $R$ is surjective:  Let $\rho = ( \eta,\zeta,\rho_3, \ldots, \rho_p ) \in \TPath{P}{D}{\eta}{\beta}^{ ( \eta,\zeta ) }$ be given.  We will show that $\pi := ( \alpha,\zeta,\rho_3, \ldots, \rho_p )$ is in $\TPath{P}{D}{\alpha}{\beta}^{ (\alpha,\zeta,\neq \eta ) }$, whence $\rho = R( \pi )$ is clear.  

The first three vertices of $\pi$ are indeed $\pi_1 = \alpha$, $\pi_2 = \zeta$, $\pi_3 \neq \eta$, since $\pi_3 = \rho_3 \neq \eta$ because $\rho \neq ( \eta,\zeta,\eta, \ldots )$ by Definition \ref{def:T-paths_and_T-path_formula}(i).  It is hence enough to prove that $\pi$ satisfies Definition \ref{def:T-paths_and_T-path_formula}, parts (i)-(iii).  Part (ii) for $\pi$ is clear from part (ii) for $\rho$ combined with the observation that the internal diagonal $\{ \alpha,\zeta \}$ of $P_1$ does not cross any diagonal in $D$.

Definition \ref{def:T-paths_and_T-path_formula}(i) holds for $\pi$ because it holds for $\rho$ while $\pi_i = \rho_i \neq \alpha$ for $i \geqslant 3$.  Indeed, Lemma \ref{lem:14} implies
\[
  \eta \leqslant \rho_i \leqslant \zeta
  \;\;\mbox{for}\;\;
  i \geqslant 1.
\]
This inequality also implies Definition \ref{def:T-paths_and_T-path_formula}(iii) for $\pi$, because it means that since the diagonals $\{ \pi_{ 2j },\pi_{ 2j+1 } \} = \{ \rho_{ 2j },\rho_{ 2j+1 } \}$ cross $\{ \eta,\beta \}$ at pairwise different points which progress monotonically from $\eta$ to $\beta$, they cross $\{ \alpha,\beta \}$ at pairwise different points which progress monotonically from $\alpha$ to $\beta$, see Figure \ref{fig:17_18a}.

Step 3:  $R$ is injective: This is clear from the formula defining $R$.

Step 4:  Equation \eqref{equ:R}:  This follows immediately by combining Equation \eqref{equ:f_on_T-paths} with the definition of $R$.  
\end{proof}

\begin{Lemma}
\label{lem:towards_Thm_A}
Assume that $\alpha \in U_1$, $\beta \in U_2$ and let $f : \diag(P) \xrightarrow{} K$ be a map such that $f\Big|_{ \diag( P_2 ) }$ satisfies the $T$-path formula with respect to $D_2$, see Definition \ref{def:T-paths_and_T-path_formula}.  Then
\begin{equation}
\label{equ:towards_Thm_A5}
  f( \zeta,\eta )^{ -1 }\big[ f( \alpha,\zeta )f( \eta,\beta ) + f( \alpha,\eta )f( \zeta,\beta ) \big] = \sum_{ \pi \in \TPath{P}{D}{\alpha}{\beta} } f( \pi ).
\end{equation}
\end{Lemma}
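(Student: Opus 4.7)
The plan is to partition the sum $\sum_{\pi \in \TPath{P}{D}{\alpha}{\beta}} f(\pi)$ according to the second vertex $\pi_2$, transfer each resulting subsum to a $T$-path sum on the subpolygon $P_2$, and then invoke the hypothesis that $f\big|_{\diag(P_2)}$ satisfies the $T$-path formula with respect to $D_2$.

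First I would apply Lemma \ref{lem:U1_to_U2} to obtain the disjoint decomposition
\[
  \TPath{P}{D}{\alpha}{\beta} = \TPath{P}{D}{\alpha}{\beta}^{(\alpha,\zeta)} \cupdot \TPath{P}{D}{\alpha}{\beta}^{(\alpha,\eta)},
\]
and then split the first piece further as
\[
  \TPath{P}{D}{\alpha}{\beta}^{(\alpha,\zeta)} = \TPath{P}{D}{\alpha}{\beta}^{(\alpha,\zeta,\neq\eta)} \cupdot \TPath{P}{D}{\alpha}{\beta}^{(\alpha,\zeta,\eta)}.
\]
Applying the bijections $R$ and $S$ of Lemma \ref{lem:17_18}(i)--(ii) together with the multiplicative identities \eqref{equ:R} and \eqref{equ:S}, the sum over $\TPath{P}{D}{\alpha}{\beta}^{(\alpha,\zeta)}$ becomes
\[
  \frac{f(\alpha,\zeta)}{f(\zeta,\eta)}\Bigl[\sum_{\rho \in \TPath{P}{D}{\eta}{\beta}^{(\eta,\zeta)}} f(\rho) + \sum_{\rho \in \TPath{P}{D}{\eta}{\beta}^{(\eta,\neq\zeta)}} f(\rho)\Bigr] = \frac{f(\alpha,\zeta)}{f(\zeta,\eta)}\sum_{\rho \in \TPath{P}{D}{\eta}{\beta}} f(\rho),
\]
since the two subsets partition $\TPath{P}{D}{\eta}{\beta}$ (noting that every such $\rho$ satisfies $\rho_1 = \eta$ tautologically, so the conditions on $\rho_2$ exhaust all cases). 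Since $\eta,\beta \in V_2$, Lemma \ref{lem:14} identifies $\TPath{P}{D}{\eta}{\beta}$ with $\TPath{P_2}{D_2}{\eta}{\beta}$, and the $T$-path formula hypothesis for $f\big|_{\diag(P_2)}$ collapses the right-hand sum to $f(\eta,\beta)$, yielding
\[
  \sum_{\pi \in \TPath{P}{D}{\alpha}{\beta}^{(\alpha,\zeta)}} f(\pi) = \frac{f(\alpha,\zeta)f(\eta,\beta)}{f(\zeta,\eta)}.
\]

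Next I would handle the sum over $\TPath{P}{D}{\alpha}{\beta}^{(\alpha,\eta)}$ by the symmetric argument, obtained from Lemma \ref{lem:17_18} simply by interchanging the labels $\zeta$ and $\eta$ throughout Setup \ref{set:ear} (the hypotheses and the lemma statements are symmetric in $\zeta,\eta$). This yields
\[
  \sum_{\pi \in \TPath{P}{D}{\alpha}{\beta}^{(\alpha,\eta)}} f(\pi) = \frac{f(\alpha,\eta)f(\zeta,\beta)}{f(\zeta,\eta)}.
\]
Adding the two contributions and factoring out $f(\zeta,\eta)^{-1}$ gives exactly Equation \eqref{equ:towards_Thm_A5}.

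I expect the main obstacle to be purely bookkeeping: ensuring that the two sub-partitions really exhaust all $T$-paths (so that no $\pi_2 \notin \{\zeta,\eta\}$ possibility is overlooked, which is handled by Lemma \ref{lem:U1_to_U2}) and invoking the $\zeta$-$\eta$ symmetry cleanly rather than re-proving analogues of Lemmas \ref{lem:10} and \ref{lem:17_18} from scratch. Everything else is an assembly of the four preceding lemmas, and there is no arithmetic in $K$ beyond multiplication, division, and addition, all of which are available in the semifield.
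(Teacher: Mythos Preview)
Your proposal is correct and follows essentially the same route as the paper: partition $\TPath{P}{D}{\alpha}{\beta}$ by $\pi_2\in\{\zeta,\eta\}$ via Lemma~\ref{lem:U1_to_U2}, use the bijections $R,S$ of Lemma~\ref{lem:17_18} together with Lemma~\ref{lem:14} to reduce each half to a $T$-path sum on $P_2$, collapse via the $T$-path formula hypothesis, and finish by the $\zeta\leftrightarrow\eta$ symmetry. The only difference is presentational order---the paper computes the two subsums first and invokes Lemma~\ref{lem:U1_to_U2} at the end---which is immaterial.
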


\begin{proof}
Lemma \ref{lem:14} gives the first of the following equalities, and Lemma \ref{lem:17_18}(i) gives the second and third equalities.
\begin{align}
\nonumber
  \frac{ f( \alpha,\zeta ) }{ f( \eta,\zeta ) }\sum_{ \rho \in \TPath{P_2}{D_2}{\eta}{\beta}^{ ( \eta,\zeta ) } }f( \rho )
  & = \frac{ f( \alpha,\zeta ) }{ f( \eta,\zeta ) }\sum_{ \rho \in \TPath{P}{D}{\eta}{\beta}^{ ( \eta,\zeta ) } }f( \rho ) \\
\nonumber
  & = \frac{ f( \alpha,\zeta ) }{ f( \eta,\zeta ) }\sum_{ \pi \in \TPath{P}{D}{\alpha}{\beta}^{ ( \alpha,\zeta,\neq \eta ) } }f\big( R( \pi ) \big) \\
\label{equ:towards_Thm_A1}
  & = \sum_{ \pi \in \TPath{P}{D}{\alpha}{\beta}^{ ( \alpha,\zeta,\neq \eta ) } } f( \pi ).
\end{align}
Similarly, Lemmas \ref{lem:14} and \ref{lem:17_18}(ii) give the following equalities.
\begin{align}
\nonumber
  \frac{ f( \alpha,\zeta ) }{ f( \zeta,\eta ) }\sum_{ \sigma \in \TPath{P_2}{D_2}{\eta}{\beta}^{ ( \eta,\neq \zeta ) } }f( \sigma )
  & = \frac{ f( \alpha,\zeta ) }{ f( \zeta,\eta ) }\sum_{ \sigma \in \TPath{P}{D}{\eta}{\beta}^{ ( \eta,\neq \zeta ) } }f( \sigma ) \\
\nonumber
  & = \frac{ f( \alpha,\zeta ) }{ f( \zeta,\eta ) }\sum_{ \pi \in \TPath{P}{D}{\alpha}{\beta}^{ ( \alpha,\zeta,\eta ) } }f\big( S( \pi ) \big) \\
\label{equ:towards_Thm_A2}
  & = \sum_{ \pi \in \TPath{P}{D}{\alpha}{\beta}^{ ( \alpha,\zeta,\eta ) } } f( \pi ).
\end{align}
The sum of Equations \eqref{equ:towards_Thm_A1} and \eqref{equ:towards_Thm_A2} is
\[
  \frac{ f( \alpha,\zeta ) }{ f( \zeta,\eta ) }\sum_{ \tau \in \TPath{P_2}{D_2}{\eta}{\beta} }f( \tau ) = \sum_{ \pi \in \TPath{P}{D}{\alpha}{\beta}^{ ( \alpha,\zeta ) } } f( \pi ).
\]
Since $f\Big|_{ \diag( P_2 ) }$ satisfies the $T$-path formula with respect to $D_2$, this reads
\begin{equation}
\label{equ:towards_Thm_A3}
  \frac{ f( \alpha,\zeta ) }{ f( \zeta,\eta ) }f( \eta,\beta ) = \sum_{ \pi \in \TPath{P}{D}{\alpha}{\beta}^{ ( \alpha,\zeta ) } } f( \pi ).
\end{equation}
By symmetry,
\begin{equation}
\label{equ:towards_Thm_A4}
  \frac{ f( \alpha,\eta ) }{ f( \eta,\zeta ) }f( \zeta,\beta ) = \sum_{ \pi \in \TPath{P}{D}{\alpha}{\beta}^{ ( \alpha,\eta ) } } f( \pi ).
\end{equation}
Lemma \ref{lem:U1_to_U2} implies that the sum of Equations \eqref{equ:towards_Thm_A3} and \eqref{equ:towards_Thm_A4} is Equation \eqref{equ:towards_Thm_A5}.
\end{proof}

\section{Proof of Theorem \ref{thm:A}}
\label{sec:Thm_A}

\begin{proof}[Proof of Theorem \ref{thm:A}]
Recall that the $T$-path formula was introduced in Definition \ref{def:T-paths_and_T-path_formula}.  It will be invoked a number of times in the proof.

``If'':
Assume that $f$ satisfies the $T$-path formula with respect to $D$.  We will prove by induction on $\ell := |D|$ that $f$ is a weak frieze with respect to $D$.

If $\ell = 0$ then $D = \emptyset$ and it is immediate from Definition \ref{def:friezes_and_weak_friezes}(ii) that 
$f$ is a weak frieze with respect to $D$.

If $\ell \geqslant 1$, then we can assume to be in the situation of Setup \ref{set:ear} and use the notation introduced there.  Setting $m=1$, $d_1=d$, $D_1 = \emptyset$ then puts us in the situation of Theorem \ref{thm:B}.

It is immediate from Definition \ref{def:friezes_and_weak_friezes}(ii) that $f\Big|_{ \diag( P_1 ) }$ is a weak frieze with respect to $D_1 = \emptyset$.  Lemma \ref{lem:T-path_formula_on_P2} says that $f\Big|_{ \diag( P_2 ) }$ satisfies the $T$-path formula with respect to $D_2$.  Since $D = \{ d \} \cupdot D_2$ we have $|D_2| = \ell-1$, so by induction $f\Big|_{ \diag( P_2 ) }$ is a weak frieze with respect to $D_2$.

By Theorem \ref{thm:B} there exists a unique weak frieze $\widetilde{f} : \diag( P ) \xrightarrow{} K$ with respect to $D$ such that
\begin{equation}
\label{equ:restriction}
  \widetilde{f}\,\Big|_{ \diag( P_i ) } = f\Big|_{ \diag( P_i ) }
\end{equation}
for $i \in \{ 1,2 \}$.  It is sufficient to show $\widetilde{f} = f$, that is
\begin{equation}
\label{equ:f_equality}
  \widetilde{f}( \alpha,\beta ) = f( \alpha,\beta )
\end{equation}
for vertices $\alpha \neq \beta$ of $P$.  There are four cases to consider.

Case 1:  $\alpha,\beta \in V_1$.  Then Equation \eqref{equ:f_equality} follows from Equation \eqref{equ:restriction}.

Case 2: $\alpha,\beta \in V_2$.  Same argument as in Case 1.

Case 3:  $\alpha \in U_1$, $\beta \in U_2$.  Since $f\Big|_{ \diag( P_2 ) }$ satisfies the $T$-path formula with respect to $D_2$, Lemma \ref{lem:towards_Thm_A} gives Equation \eqref{equ:towards_Thm_A5}.  Since $f$ satisfies the $T$-path formula with respect to $D$, the equation reads
\[
  f( \zeta,\eta )^{ -1 }\big[ f( \alpha,\zeta )f( \eta,\beta ) + f( \alpha,\eta )f( \zeta,\beta ) \big] = f( \alpha,\beta ).
\]
Equation \eqref{equ:restriction} implies that this reads
\begin{equation}
\label{equ:implies_f_equality}
  \widetilde{f}( \zeta,\eta )^{ -1 }\big[ \widetilde{f}( \alpha,\zeta )\widetilde{f}( \eta,\beta ) + \widetilde{f}( \alpha,\eta )\widetilde{f}( \zeta,\beta ) \big] = f( \alpha,\beta ).
\end{equation}
But $\widetilde{f}$ is a weak frieze with respect to $D$ and $d = \{ \zeta,\eta \}$ is in $D$, so Equation \eqref{equ:implies_f_equality} becomes Equation \eqref{equ:f_equality}.

Case 4:  $\alpha \in U_2$, $\beta \in U_1$.  This reduces to Case 3 because each side of Equation \eqref{equ:f_equality} is symmetric in $\alpha$ and $\beta$ since $f$ and $\widetilde{f}$ are defined on $\diag(P)$.

``Only if'':
Assume that $f$ is a weak frieze with respect to $D$.  We will prove by induction on $\ell := |D|$ that $f$ satisfies the $T$-path formula with respect to $D$.

If $\ell = 0$ then $D = \emptyset$.  It follows that if $\alpha \neq \beta$ are vertices of $P$, then $\{ \alpha,\beta \}$ crosses no diagonal in $D$, so Equation \eqref{equ:the_T-path_formula} holds by Lemma \ref{lem:trivial_T-paths}(ii).  That is, $f$ satisfies the $T$-path formula with respect to $D$.   

If $\ell \geqslant 1$, then we can assume to be in the situation of Setup \ref{set:ear} and use the notation introduced there.  It is clear from Definition \ref{def:friezes_and_weak_friezes}(ii) that $f\Big|_{ \diag( P_2 ) }$ is a weak frieze with respect to $D_2$.  Since $D = \{ d \} \cupdot D_2$ we have $|D_2| = \ell-1$, so by induction $f\Big|_{ \diag( P_2 ) }$ satisfies the $T$-path formula with respect to $D_2$.  To show that $f$ satisfies the $T$-path formula with respect to $D$, let $\alpha \neq \beta$ be vertices of $P$.  We must show that Equation \eqref{equ:the_T-path_formula} holds, and there are four cases to consider.

Case 1:  $\alpha, \beta \in V_1$.  Then the diagonal $\{ \alpha,\beta \}$ crosses no diagonal in $D$, so Equation \eqref{equ:the_T-path_formula} holds by Lemma \ref{lem:trivial_T-paths}(ii).

Case 2:  $\alpha, \beta \in V_2$.  Then
\[
  f( \alpha,\beta ) = \sum_{ \pi \in \TPath{P_2}{D_2}{\alpha}{\beta} } f( \pi ) = \sum_{ \pi \in \TPath{P}{D}{\alpha}{\beta} } f( \pi )
\]
as desired.  The first equality holds because $f\Big|_{ \diag( P_2 ) }$ satisfies the $T$-path formula with respect to $D_2$, and the second equality is by Lemma \ref{lem:14}.

Case 3:  $\alpha \in U_1$, $\beta \in U_2$.  Since $f\Big|_{ \diag( P_2 ) }$ satisfies the $T$-path formula with respect to $D_2$, Lemma \ref{lem:towards_Thm_A} applies and gives Equation \eqref{equ:towards_Thm_A5}.  Since $d = \{ \zeta,\eta \}$ is in $D$ and crosses $\{ \alpha,\beta \}$ while $f$ is a weak frieze with respect to $D$, Equation \eqref{equ:towards_Thm_A5} becomes Equation \eqref{equ:the_T-path_formula} as desired.

Case 4:  $\alpha \in U_2$, $\beta \in U_1$.  This reduces to Case 3 because each side of Equation \eqref{equ:the_T-path_formula} is symmetric in $\alpha$ and $\beta$.  The left hand side is symmetric since $f$ is defined on $\diag(P)$, and the right hand side is symmetric by Lemma \ref{lem:inversion}(ii).
\end{proof}

\medskip
\noindent
{\bf Acknowledgement.}
This work was supported by EPSRC grant EP/P016014/1 ``Higher Dimensional Homological Algebra''.

\end{document}